\title{Long QMDS additive code}
\author{Daniele Bartoli\thanks{Dipartimento di Matematica e Informatica, Universit\`a degli Studi di Perugia,  Perugia, Italy. daniele.bartoli@unipg.it},  
Alessandro Giannoni\thanks{Dipartimento di Matematica e Applicazioni  ``R. Caccioppoli'', Università di Napoli Federico II, Napoli, Italy, alessandro.giannoni@unina.it},
Giuseppe Marino\thanks{Dipartimento di Matematica e Applicazioni  ``R. Caccioppoli'', Università di Napoli Federico II, Napoli, Italy, giuseppe.marino@unina.it},
Yue Zhou\thanks{Department of Mathematics, National University of Defense Technology, 410073 Changsha, China, yue.zhou.ovgu@gmail.com}}
\date{}
\newtheorem{theorem}{Theorem}[section]
\newtheorem{lemma}[theorem]{Lemma}
\newtheorem{remark}[theorem]{Remark}
\newtheorem{cor}[theorem]{Corollary}
\newtheorem{prop}[theorem]{Proposition}
\newtheorem{definition}[theorem]{Definition}
\def\F{\mathbb{F}}
\begin{document}

\maketitle
\begin{abstract}
We investigate additive codes, defined as $\mathbb{F}_q$-linear subspaces $C \subseteq \mathbb{F}_{q^h}^n$ of length $n$ and dimension $r$ over $\mathbb{F}_q$. An additive code is said to be of type $[n, r/h, d]_q^h$, where $d$ denotes the minimum Hamming distance and the normalized dimension $r/h$ may be fractional. A central object of interest is the class of quasi-MDS (QMDS) codes, those additive codes achieving the generalized Singleton bound:

$$
d = n - \left\lceil \frac{r}{h} \right\rceil + 1.
$$

In this work, we construct explicit families of additive QMDS codes whose lengths exceed those of the best-known $\mathbb{F}_{q^h}$-linear MDS codes which is $q^h+1$, and we will call these types of codes ``Long''
. By leveraging $\mathbb{F}_q$-linearity and geometric tools like partial spreads and dimensional dual arcs, we show that additive structures allow longer codes without sacrificing optimality in distance. We also examine dual codes and give conditions under which the QMDS property is preserved under duality.

\end{abstract}
\section{Introduction}

Additive codes generalize classical linear codes by relaxing the requirement of linearity over the ambient alphabet. This broader framework captures a wide range of code families, including subfield-linear and trace-orthogonal codes, and has proven especially relevant in applications such as byte error correction~\cite{raviv2023quantum}, quantum coding, and low-density constructions~\cite{boucher2019ldpc, zhang2022ldpc}.

A central class of interest within  coding theory is that of maximum distance separable (MDS) codes, which achieve the Singleton bound
$$d\leq n-k+1.$$ However, in the additive setting, the Singleton bound depends not only on the code length and dimension, but also on the field extension degree $h$. In this context, the appropriate generalization is that of quasi-MDS (QMDS) codes, which attain the bound
\[
d = n - \left\lceil \frac{r}{h} \right\rceil + 1,
\]
where $r$ denotes the dimension of the code over $\mathbb{F}_q$~\cite{martinezpenas2024linear}. When both a code and its dual satisfy this equality, we say the code is \emph{dually QMDS}.

Recent work has shown that additive QMDS codes offer more flexibility than their classical linear counterparts. In particular, they may exceed the maximum known length of $\mathbb{F}_{q^h}$-linear MDS codes, so $q^h+1$, such as Reed–Solomon codes~\cite{BLP}. This phenomenon arises from the gap between $n - \left\lceil \frac{r}{h} \right\rceil + 1$ and $n -  \frac{r}{h}  + 1$, and motivates the search for new constructions beyond traditional bounds.

In this work, we present explicit constructions of additive QMDS codes with length strictly greater than that of the longest known $\mathbb{F}_{q^h}$-linear MDS codes; see Section \ref{Section:constructions}. Such constructions necessitate a precise selection of subspaces exhibiting specific intersection characteristics. Their design illustrates that by adopting $\mathbb{F}_q$-linearity, it is possible to achieve extended code lengths while maintaining optimality relative to the generalized Singleton bound. In addition, we investigate the duals of these codes, providing conditions under which the dually QMDS property is satisfied. Our results are also connected with classical objects in finite geometry, such as dual hyperovals, dual arcs,  and spreads; see Section \ref{Section:Pre}.

Finally, it is worth mentioning that our results contribute to the growing body of research on additive and subfield-linear codes~\cite{martinezpenas2023unifying, martinezpenas2023weight}, and highlight the role of $\mathbb{F}_q$-linearity in overcoming classical constraints. This perspective opens new directions for constructing codes with strong distance properties and extended lengths, with potential applications in both classical and quantum communication.

It is worth noting that additive codes in $\mathbb{F}_{q^h}^n$ can also be viewed as linear codes endowed with the so-called \emph{folded Hamming distance}, as proposed in~\cite{martinezpenas2024linear}. This point of view unifies rank-metric and Hamming-metric theories under a common algebraic structure~\cite{martinezpenas2023unifying}. However, in this work we choose to focus on the $\mathbb{F}_q$-linear interpretation of these codes, emphasizing their subspace structure and geometric behavior over the base field.

\section{Preliminaries and Notation}\label{Section:Pre}

Let $q$ be a prime power, and let $\mathbb{F}_{q^h}$ denote the finite field extension of degree $h$ over $\mathbb{F}_q$. Let $\mathcal{B}=\{1,\xi,\xi^2,\dots,\xi^{h-1}\}$ be a basis of $\F_{q^h}$ over $\F_q$.

\subsection{Partial Spreads in Affine Geometry}

Let $\mathbb{F}_q^r$ denote the $r$-dimensional vector space over the finite field $\mathbb{F}_q$. A central concept in finite geometry is the notion of a partial spread: a collection of subspaces of fixed dimension that intersect trivially. These structures are especially relevant in coding theory, as they provide geometric configurations with strong independence properties.

\begin{definition}
A \emph{partial $t$-spread} in $\mathbb{F}_q^r$ is a collection $\mathcal{S}$ of $t$-dimensional $\mathbb{F}_q$-subspaces of $\mathbb{F}_q^r$ such that for all distinct $U, V \in \mathcal{S}$, we have $U \cap V = \{0\}$.
\end{definition}

A partial spread is said to be \emph{complete} or a \emph{full spread} if its elements partition the nonzero vectors of $\mathbb{F}_q^r$. This occurs precisely when $t$ divides $r$, in which case the full $t$-spread has size $(q^r - 1)/(q^t - 1)$. When $t$ does not divide $r$, complete spreads do not exist, and the study of partial spreads concerns determining the maximal number of pairwise disjoint $t$-subspaces.

A classical counting result provides an upper bound on the size of any partial $t$-spread.

\begin{theorem}
Let $1 \le t < r$ and suppose $t \nmid r$. Then any partial $t$-spread $\mathcal{S}$ in $\mathbb{F}_q^r$ satisfies
\[
|\mathcal{S}| \le \left\lfloor \frac{q^r - 1}{q^t - 1} \right\rfloor.
\]
\end{theorem}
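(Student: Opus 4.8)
The plan is to bound the size of a partial $t$-spread $\mathcal{S}$ by a simple counting argument on nonzero vectors. Each element $U \in \mathcal{S}$ is a $t$-dimensional $\mathbb{F}_q$-subspace, hence contains exactly $q^t - 1$ nonzero vectors. Because any two distinct members meet only in $\{0\}$, their sets of nonzero vectors are pairwise disjoint. Therefore the union of all members contains exactly $|\mathcal{S}|(q^t-1)$ nonzero vectors, and this quantity cannot exceed the total number $q^r - 1$ of nonzero vectors in $\mathbb{F}_q^r$.

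From $|\mathcal{S}|(q^t - 1) \le q^r - 1$ we immediately obtain $|\mathcal{S}| \le (q^r-1)/(q^t-1)$, and since $|\mathcal{S}|$ is an integer we may round down to conclude $|\mathcal{S}| \le \lfloor (q^r-1)/(q^t-1) \rfloor$. I would present this as the clean first step, since it already yields a correct bound and requires no hypothesis beyond disjointness.

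The subtle point is that the stated bound uses the floor function, which is meaningful precisely when $t \nmid r$: in that case $(q^t-1)$ does not divide $(q^r-1)$, so the floor represents a genuine improvement over the naive rational bound and cannot in general be attained with equality (a full spread, which would meet the bound exactly, exists only when $t \mid r$). I expect the main delicate step to be justifying why the floor is the correct and non-trivial form here rather than simply $(q^r-1)/(q^t-1)$; one shows that $q^r \equiv q^{r \bmod t} \pmod{q^t-1}$, so the remainder term $q^{r\bmod t}-1$ is strictly positive and strictly less than $q^t-1$ when $t \nmid r$, confirming that the bound is not an integer and the floor is strict. This congruence computation, while elementary, is where the hypothesis $t \nmid r$ genuinely enters.

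Thus the overall strategy is: first establish the rational inequality by disjoint counting of nonzero vectors, then invoke integrality of $|\mathcal{S}|$ to pass to the floor, and finally use the divisibility analysis via the congruence $q^r - 1 \equiv q^{r \bmod t} - 1 \pmod{q^t - 1}$ to confirm the floor is the sharp statement under the assumption $t \nmid r$. The counting step is routine; the only place requiring care is tracking that the remainder in the division is controlled precisely by $r \bmod t$.
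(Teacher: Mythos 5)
Your proof is correct and is precisely the classical counting argument the paper invokes (the paper states this theorem without proof, as a standard result): pairwise trivial intersections make the sets of nonzero vectors disjoint, giving $|\mathcal{S}|(q^t-1) \le q^r-1$, and integrality of $|\mathcal{S}|$ yields the floor. One remark: your final paragraph is not actually part of the proof --- the inequality holds with no divisibility hypothesis at all, and the assumption $t \nmid r$ only serves to make the floor a strict improvement on the rational bound (also, your parenthetical is slightly off: a partial spread attaining the floor when $t \nmid r$ would not be a full spread, since it leaves some nonzero vectors uncovered; whether the floor is attainable in that case is a separate and much harder question, and the paper notes it is often far from achievable).
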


The bound is tight when $t$ divides $r$, in which case a full $t$-spread exists and the floor is not necessary. In the general case, this upper bound is often far from {being} achievable, especially for small $q$ and large $r$.

{Also}, constructions of partial spreads have been found, so we have a lower bound for the size of a  partial $t$-spread with maximum cardinality. 

\begin{theorem}[\cite{Beutelspacher} Theorem 4.2]\label{beut}
Let $r = at + b$ with $0 < b < t$ and $a = \lfloor r/t \rfloor$. Then there exists a partial $t$-spread $\mathcal{S}$ in $\mathbb{F}_q^r$ such that
\[
|\mathcal{S}| = \sum_{i=1}^{a-1}q^{it+b}+1.
\]   
\end{theorem}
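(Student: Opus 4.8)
The plan is to prove the statement by induction on $a=\lfloor r/t\rfloor$, where each step raises the ambient dimension by exactly $t$ and enlarges the spread by exactly $q^{(a-1)t+b}=q^{r-t}$ new blocks. This matches the target cardinalities, since writing $N_a=\sum_{i=1}^{a-1}q^{it+b}+1$ one has $N_a-N_{a-1}=q^{(a-1)t+b}=q^{r-t}$. For the base case $a=1$ we are in the range $t<r<2t$, and a single $t$-subspace is the (trivial) partial spread of size $1=\sum_{i=1}^{0}q^{it+b}+1$; indeed no two $t$-subspaces can be disjoint here, as $\dim(U+V)=2t-\dim(U\cap V)\le r<2t$ forces $U\cap V\neq\{0\}$.

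For the inductive step I would set $V=\mathbb{F}_q^r$ with $r=at+b$ and fix a decomposition $V=U_0\oplus T$ with $\dim U_0=r-t=(a-1)t+b$ and $\dim T=t$. By the inductive hypothesis applied inside $U_0\cong\mathbb{F}_q^{(a-1)t+b}$ (whose floor quotient is $a-1$ since $0<b<t$), there is a partial $t$-spread $\mathcal{S}_0$ of size $N_{a-1}=\sum_{i=1}^{a-2}q^{it+b}+1$. I then enlarge $\mathcal{S}_0$ by a family $\mathcal{W}$ of $t$-subspaces, each chosen to be a complement of $U_0$ in $V$. Any complement $W$ satisfies $W\cap U_0=\{0\}$, so it automatically meets every member of $\mathcal{S}_0$ (all lying in $U_0$) trivially; hence the only genuine requirement is that the members of $\mathcal{W}$ be pairwise disjoint. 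Since every complement of $U_0$ is the graph $W_f=\{v+f(v):v\in T\}$ of a unique $\mathbb{F}_q$-linear map $f\colon T\to U_0$, and $W_f\cap W_g=\{0\}$ precisely when $f-g$ is injective, the task reduces to finding many maps with pairwise injective differences.

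The crux, and the step I expect to be the main obstacle, is therefore to produce a family $\mathcal{F}$ of maps $T\to U_0$ with $|\mathcal{F}|=q^{r-t}$ whose pairwise differences are all injective, this being exactly the number of new blocks needed. Because $\dim T=t\le(a-1)t+b=\dim U_0$, such a family exists: I would identify $U_0$ with the field $\mathbb{F}_{q^{r-t}}$ as an $\mathbb{F}_q$-space, fix a $t$-dimensional $\mathbb{F}_q$-subspace $T'\subseteq U_0$ together with an isomorphism $\iota\colon T\to T'$, and define $f_\lambda\colon T\to U_0$ by $f_\lambda(x)=\lambda\cdot\iota(x)$ for each $\lambda\in\mathbb{F}_{q^{r-t}}$. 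For $\lambda\neq\mu$ the difference $f_\lambda-f_\mu$ is the injection $\iota$ followed by multiplication by the nonzero scalar $\lambda-\mu$, hence injective, giving exactly $q^{r-t}$ pairwise-disjoint complements of $U_0$. (Equivalently, one may invoke the existence of a linear MRD/spread set of $t\times(r-t)$ matrices over $\mathbb{F}_q$ with minimum rank distance $t$, which attains size $q^{r-t}$.) Adjoining these to $\mathcal{S}_0$ yields a partial $t$-spread of size $N_{a-1}+q^{r-t}=\sum_{i=1}^{a-1}q^{it+b}+1=N_a$, completing the induction.
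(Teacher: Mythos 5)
Your proof is correct. Note that the paper itself gives no proof of this statement -- it is quoted verbatim as Theorem 4.2 of Beutelspacher's 1975 paper -- so there is no internal argument to compare against; your induction (peel off a $t$-dimensional direct summand, keep the inductive partial spread inside $U_0$, and adjoin $q^{r-t}$ pairwise disjoint complements of $U_0$ obtained as graphs of the maps $x\mapsto\lambda\,\iota(x)$, $\lambda\in\mathbb{F}_{q^{r-t}}$) is essentially the classical construction underlying Beutelspacher's bound, with the key step correctly reduced to a family of linear maps with pairwise injective differences, and the dimension check $t\le (a-1)t+b$ that makes the scalar-multiplication trick work holds precisely because $a\ge 2$ in the inductive step.
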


\subsection{Dimensional Dual Arcs and Dual Hyperovals}

\begin{definition}
Let \( V \) be a vector space of dimension \( r \) over the finite field \( \mathbb{F}_q \). A set \( {L} \) of \( (d+1) \)-dimensional subspaces of \( V \)  is called a {\( d \)-dimensional dual arc} (DDA) in \( V \) if it satisfies the following three conditions:

\begin{enumerate}
\item For any two distinct elements \( X, Y \in L \), the intersection \( X \cap Y \) is a 1-dimensional subspace.
\item For any three distinct elements \( X, Y, Z \in L \), the intersection \( X \cap Y \cap Z = \{0\} \).
\item All the elements of \( L \) generate  \( V \).
\end{enumerate}
\end{definition}

A dual arc \( L \) is said to be {complete} if it is not properly contained in any larger dual arc in the same ambient space.

\medskip

Let
\[
\theta_q(d) := \frac{q^{d+1} - 1}{q - 1}
\]
denote the number of 1-dimensional subspaces of a \( (d+1) \)-dimensional vector space over \( \mathbb{F}_q \). Then the following holds:

\begin{lemma}
If \( L \) is a \( d \)-dimensional dual arc in \( V \), then \( |L| \leq \theta_q(d) + 1 \).
\end{lemma}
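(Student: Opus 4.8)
The plan is to fix a single element of the dual arc and to track how every other element meets it. Let $X \in L$ be arbitrary. Since $X$ is a $(d+1)$-dimensional subspace of $V$, the number of its $1$-dimensional subspaces is exactly $\theta_q(d) = \frac{q^{d+1}-1}{q-1}$. To each remaining element $Y \in L \setminus \{X\}$ I would associate the subspace $\phi(Y) := X \cap Y$. By the first defining property of a DDA this intersection is precisely $1$-dimensional, so $\phi$ is a well-defined map from $L \setminus \{X\}$ into the set of $1$-dimensional subspaces of $X$.

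The key step is to prove that $\phi$ is injective, and this is exactly where the second defining property is used. Suppose $Y, Z \in L \setminus \{X\}$ are distinct yet $\phi(Y) = \phi(Z)$, that is $X \cap Y = X \cap Z$. Then this common $1$-dimensional subspace is contained in $X \cap Y \cap Z$, which would force $X \cap Y \cap Z \neq \{0\}$, contradicting condition (2). Hence distinct elements of $L \setminus \{X\}$ are sent to distinct $1$-dimensional subspaces of $X$, so $\phi$ is injective.

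Counting then completes the argument: injectivity yields $|L| - 1 = |L \setminus \{X\}| \le \theta_q(d)$, and therefore $|L| \le \theta_q(d) + 1$, as claimed.

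I do not anticipate a genuine obstacle here, as the whole statement collapses to an injective-counting argument; the only point demanding care is invoking the triple-intersection condition correctly to secure injectivity. It is also worth noting that the generating property (condition (3)) is not needed for this upper bound, and would only come into play when discussing completeness of the dual arc or a matching lower bound.
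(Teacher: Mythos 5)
Your proof is correct: the map $Y \mapsto X \cap Y$ is well defined by condition (1), injective by condition (2), and the count of $1$-dimensional subspaces of the $(d+1)$-dimensional space $X$ gives exactly $\theta_q(d)$, yielding the bound. The paper itself states this lemma without proof (treating it as a standard fact from the dual-arc literature), and your injective-counting argument is precisely the canonical proof one would supply; your closing observation that condition (3) plays no role in the upper bound is also accurate.
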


\begin{definition}
A \( d \)-dimensional dual arc \( L \) is called a {dual hyperoval} (DHO) if \( |L| = \theta_q(d) + 1 \).
\end{definition}

\begin{definition}\cite{Dempwolff} A DHO $L$ in $V$ splits over the subspace  if there exists a subspace $Y$ such that $X\oplus Y=V$ for all $X\in L$. We call a DHO  
splitting if it splits over some subspace; otherwise we call it non-splitting.
\end{definition}

\begin{theorem}\cite[Proposition 2.9]{delfra}\label{noDHO} If there exists a $d$-dimensional DHO $L$ in $V\simeq\F_q^{2d+1}$, then $q$ is even.
\end{theorem}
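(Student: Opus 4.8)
The plan is to reduce the statement to the characteristic‑two nature of a symmetric bilinear form, following the same philosophy that makes hyperovals (the case $d=1$) exist only in even characteristic.

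First I would establish the fundamental combinatorial consequence of the DHO axioms: every point of the ambient projective space $PG(2d,q)$ lies on either $0$ or exactly $2$ members of $L$, so that there are no \emph{tangent} points incident with a single member. To see this, fix $X\in L$. Each of the remaining $|L|-1$ members meets $X$ in a $1$-dimensional subspace, and by condition (2) these intersection points are pairwise distinct, since a point lying on two other members together with $X$ would lie on three members of $L$. As $X$ contains exactly $\theta_q(d)$ projective points and $|L|-1=\theta_q(d)$, the assignment $Y\mapsto X\cap Y$ is a bijection from $L\setminus\{X\}$ onto the points of $X$. Hence every point of every member lies on exactly one further member, so no point is incident with exactly one member, while any point outside $\bigcup_{X\in L}X$ lies on none. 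This is the precise analogue of the fact that a hyperoval has no tangent lines.

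For the base case $d=1$ this already finishes the argument: the members are $q+2$ lines of $PG(2,q)$, and intersecting them with any line $\ell\notin L$ gives a well-defined map $L\to\ell$ (two distinct lines meet in a point) whose fibres, being the members through a fixed point of $\ell$, have size $0$ or $2$; hence $|L|=q+2$ is even and $q$ is even. For $d\ge 2$ the parity of $|L|=\theta_q(d)+1$ no longer detects the parity of $q$, so a finer, algebraic argument is required. I would fix two members $X_0,X_1$ with $X_0\cap X_1=\langle e_0\rangle$; since $\dim X_0+\dim X_1-\dim(X_0\cap X_1)=2d+1$, they span $V$, and I complete $e_0$ to a basis adapted to $V=X_0+X_1$. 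Each further member $Y$ is then recorded by the pair of traces $(Y\cap X_0,\,Y\cap X_1)$; by the bijection of Step 1 these traces exhaust the points of $X_0$ and of $X_1$ distinct from $\langle e_0\rangle$, and the map $Y\mapsto(Y\cap X_0,Y\cap X_1)$ is injective because $\langle Y\cap X_0,\,Y\cap X_1\rangle$ would otherwise lie in a $1$-dimensional intersection of two members. Thus the members are encoded by a \emph{slope} bijection $\sigma$ between the point sets of $X_0$ and $X_1$, and the conditions ``any two members span $V$'' and ``any three meet trivially'' should translate into the statement that $\sigma$, together with the higher-dimensional data completing each $Y$, is governed by a single quadratic form $Q$ on $V$, i.e.\ by a quadric $\mathcal Q$ in $PG(2d,q)$ whose secant/tangent pattern reproduces the $0$/$2$ incidence found above.

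The conclusion would then follow from the polarity of $\mathcal Q$: the absence of tangent points forces the polar bilinear form $B(u,v)=Q(u+v)-Q(u)-Q(v)$ to be alternating, that is $B(v,v)=0$ for all $v$. A bilinear form that is simultaneously symmetric and alternating and not identically zero can exist only when $2=0$ in $\F_q$, since symmetry together with alternation yields $2B(u,v)=0$ for all $u,v$; hence $q$ must be even. I expect the main obstacle to be exactly the middle step: rigorously proving that the geometric slope data is \emph{additive}, so that it genuinely arises from a single quadratic form $Q$ rather than from an arbitrary bijection $\sigma$, and verifying that this form is nondegenerate enough for the symmetric‑plus‑alternating dichotomy to apply. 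The combinatorial Step 1 and the forced alternation of Step 3 are routine once this translation is in place; establishing the additivity of the encoding is the technical heart of the proof.
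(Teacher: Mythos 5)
The paper never proves this statement: it is imported verbatim from Del Fra \cite{delfra} (Proposition 2.9), so the only question is whether your argument stands on its own. It does not; it is complete only in the case $d=1$. Your Step 1 (every point of $PG(2d,q)$ lies on $0$ or exactly $2$ members of $L$) is correct and is indeed the standard first observation, and your Step 2 then settles $d=1$ correctly: the fibres of $X\mapsto X\cap\ell$ partition $L$ into classes of size $0$ or $2$, so $|L|=q+2$ is even. But $d=1$ is exactly the classical hyperoval theorem; the substance of the statement is $d\ge 2$, where, as you yourself note, the parity of $|L|=\theta_q(d)+1$ carries no information (for $q$ odd it is even whenever $d$ is even), so everything rests on your Step 3.

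Step 3 is not a proof but a programme, and its central claim is precisely the thing that fails. The assertion that the slope encoding $Y\mapsto (Y\cap X_0,\,Y\cap X_1)$, together with ``the higher-dimensional data completing each $Y$,'' is \emph{governed by a single quadratic form} $Q$ on $V$ is not a consequence of the DHO axioms, and there is no reason to expect it: a DHO is a purely combinatorial object, axioms (1)--(3) contain no additivity whatsoever, and the known zoo of inequivalent DHOs (Huybrechts', Buratti--Del Fra, Veronesean, Yoshiara's deformations, \dots) shows that members of a DHO are not in general sections or tangent spaces of any quadric. Even crude invariants rule out the identification you hope for: the union of the members of $L$ covers $\binom{\theta_q(d)+1}{2}\approx q^{2d}/2$ points, whereas a parabolic quadric in $PG(2d,q)$ has roughly $q^{2d-1}$ points and its maximal singular subspaces have projective dimension $d-1$, not $d$, so no quadric can ``reproduce the $0/2$ incidence pattern'' of $L$. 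The final polarity step (a nonzero form that is both symmetric and alternating forces $2=0$) is fine as linear algebra, but it hangs entirely on the unproven middle step, which you yourself flag as the technical heart. As written, then, the proposal proves the theorem for $d=1$ and leaves the general case open; a correct treatment of $d\ge 2$ (as in Del Fra's paper) has to proceed by genuinely combinatorial/geometric counting rather than by forcing an algebraic form onto $L$.
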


\section{Additive codes}
For any additive code $C$ over a finite field $\mathbb{K}$, we can always find a field $\mathbb{F}_q$ that is the largest field between $\mathbb{F}_p$ and $\mathbb{K}$ such that $C$ is $\mathbb{F}_q$-linear. Write $\mathbb{K}$ as $\mathbb{F}_{q^h}$ for some positive integer $h$. \textbf{Throughout the rest part of this paper, without explicit statement, any additive code is an $\mathbb{F}_q$-linear subspace in $\mathbb{F}_{q^h}^n$ of dimension $r$ over $\mathbb{F}_q$ with minimum distance $d$.}
The triple $(n, r/h, d)$ denotes the length, normalized dimension, and minimum Hamming distance of such a code, and we refer to it as a code of type $[n, r/h, d]_q^h$. This notation highlights the ratio $r/h$, which may or may not be an integer. From now on $k$ will indicate $\lceil r/h\rceil$.

\begin{definition}
An additive code $C \subseteq \mathbb{F}_{q^h}^n$ is said to be \emph{integral} if $r/h \in \mathbb{N}$, and \emph{fractional} otherwise.
\end{definition}

The main parameter of interest is the minimum Hamming distance $d(C)$, defined in the usual way as
\[
d(C) := \min\{ \operatorname{wt}(c) : c \in C \setminus \{0\} \},
\]
where $\operatorname{wt}(c)$ counts the number of nonzero coordinates in $c$.

The dual of an additive $[n,r/h,d]^h_
 q$ code $C$ is defined by
 $$C^\perp=\{v\in\F_{q^h}^n\,:\, 
   \text{Tr}_{q^h/q}(\langle u,v\rangle) = 0, \textrm{ for all } u \in C\},$$
 where $\text{Tr}_{q^h/q}$ denotes the trace function from $\F_{q^h}$ to $\F_{q}$, and $\langle u,v\rangle$ denotes the classical inner product in $\F_{q^h}^n$. {Note that} $C^\perp$ is an additive $[n,n-r/h,d^\perp]^h_
 q$ code.
 
\begin{definition}
    Let $C_1,C_2$ be two additive codes in $\F_{q^h}^n$. We will say that $C_1$ and $C_2$ are equivalent if there exist $f_1,\dots,f_n$ $\F_q$-linear invertible maps and a permutaton $\sigma:[n]\rightarrow[n]$ such that $$C_1=\{\overline{f}(c^\sigma)\,:\,c\in C_2\},$$
    where $\overline{f}(c^\sigma):=(f_1(c_{\sigma(1)}),\dots,f_n(c_{\sigma(n)}))$.
\end{definition}
This definition is equivalent to the one in \cite[Definition 5]{martinezpenas2024linear}.

Since {any} $[n,r/h,d]_{q}^h$ additive code $C$ is an $\F_q$-subspace of $\F_{q^h}^n$,  it can be defined as the ${\mathbb F}_q$-subspace spanned by the rows of ${G}$, an $r \times n$ matrix with entries in ${\mathbb F}_{q^h}$, that we will call a generator matrix for $C$:

   $$G=\begin{pmatrix}
    \color{blue}a_{1,1}&\color{red}a_{1,2}&\cdots&\color{olive}a_{1,n}\\
    \color{blue}a_{2,1}&\color{red}a_{2,2}&\cdots&\color{olive}a_{2,n}\\
    \vdots&\vdots&\vdots&\vdots\\
    \color{blue}a_{r,1}&\color{red}a_{r,2}&\cdots&\color{olive}a_{r,n}\\
\end{pmatrix}\in M(r,n,\mathbb{F}_{q^h}).$$
The elements of ${G}$ are elements of ${\mathbb F}_{q^h}$, which we can split over the basis $\mathcal B$,
obtaining a $r \times nh$ matrix $\tilde G$ whose elements are from ${\mathbb F}_q$, and whose columns are grouped together in $n$ blocks of $h$ columns:

$$\tilde{G}=\begin{pmatrix}
    \color{blue}b^1_{1,1}&\color{blue}\cdots&\color{blue}b^h_{1,1}&\color{red}b^1_{1,2}&\color{red}\cdots&\color{red}b^h_{1,2}&\cdots&\color{olive}b^1_{1,n}&\color{olive}\cdots&\color{olive}b^h_{1,n}\\
    \color{blue}b^1_{2,1}&\color{blue}\cdots&\color{blue}b^h_{2,1}&\color{red}b^1_{2,2}&\color{red}\cdots&\color{red}b^h_{2,2}&\cdots&\color{olive}b^1_{2,n}&\color{olive}\cdots&\color{olive}b^h_{2,n}\\
    \color{blue}\vdots&\color{blue}\cdots&\color{blue}\vdots&\color{red}\vdots&\color{red}\cdots&\color{red}\vdots&\cdots&\color{olive}\vdots&\color{olive}\cdots&\color{olive}\vdots\\
    \color{blue}b^1_{r,1}&\color{blue}\cdots&\color{blue}b^h_{r,1}&\color{red}b^1_{r,2}&\color{red}\cdots&\color{red}b^h_{r,2}&\cdots&\color{olive}b^1_{r,n}&\color{olive}\cdots&\color{olive}b^h_{r,n}\\
\end{pmatrix}\in M(r,nh,\mathbb{F}_{q}).$$
$$\color{blue}{U_{1}(C,G)}\color{black}\hspace{1.6cm}\color{red}{U_2(C,G)}\color{black}\hspace{0.9cm}\dots\hspace{1.1cm}\color{olive}{U_n(C,G)}\color{black}\hspace{1.4cm}$$

Each $i$-th block of $h$ columns spans a subspace $U_i(C,G)$ (we can use $U_i(G),U_i(C)$ or $U_i$ when $C$ or $G$ are clear from the context) of $\F_q^r$ of dimension at most $h$. 

In \cite{BLP} the connection between $C$ and the multiset $\mathcal{X}_G(C):=\{U_1,\dots,U_n\}$ was studied, and it is noted that the choice of $G$ does not change the multiset $\mathcal{X}_G(C)$, so we can call it $\mathcal{X}(C)$. 
%We have that ${\mathcal{X}}_G(C)$ is independent of the choice of the basis $\mathcal B$ since we can observe that each element of the multiset ${\mathcal{X}}_G(C)$ is obtained as the column space of one of the blocks in $\tilde G$, and a change of basis might change the matrix, but not its column space. Then, if the choice of the generator matrix is  not important in the context we will use ${\mathcal X}(C)$ instead of ${\mathcal{X}}_G(C)$.

\begin{definition}[\cite{BLP}, Definition 4]
        An $h-(n,r,d)_q$ system is a multiset $S$ of $n$ subspaces of $\F_q^r$ of dimension at most $h$ such that each hyperplane of $\F_q^r$ contains at most $n-d$ elements of $S$ and some hyperplane contains exactly $n-d$ elements of $S$.
    \end{definition}

\begin{theorem}[\cite{BLP}, Theorem 5] \label{code-system}
If $C$ is an additive $[n,r/h,d]_{q}^h$ code, then ${\mathcal{X}}(C)$ is an $h-(n,r,d)_q$ system, and conversely, each  $h-(n,r,d)_q$ system defines an additive $[n,r/h,d]_{q}^h$ code.
\end{theorem}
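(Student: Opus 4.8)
The plan is to translate the Hamming weight of a codeword into a statement about which of the subspaces $U_i$ lie inside a hyperplane of $\F_q^r$, and then read off both the minimum distance and the system conditions from this dictionary. Fix a generator matrix $G$ and identify $C$ with the image of the $\F_q$-linear map $\lambda \mapsto \lambda G$ from $\F_q^r$ to $\F_{q^h}^n$. For $\lambda=(\lambda_1,\dots,\lambda_r)\in\F_q^r$ the $i$-th coordinate of $c=\lambda G$ is $c_i=\sum_{j}\lambda_j a_{j,i}$; expanding $a_{j,i}=\sum_{l=1}^h b^l_{j,i}\xi^{l-1}$ over $\mathcal B$ gives $c_i=\sum_{l=1}^h\big(\sum_j \lambda_j b^l_{j,i}\big)\xi^{l-1}$. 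Since $\mathcal B$ is a basis, $c_i=0$ if and only if $\lambda$ is orthogonal (for the standard inner product on $\F_q^r$) to every column of the $i$-th block of $\tilde G$, i.e.\ if and only if $U_i\subseteq H_\lambda$, where $H_\lambda:=\{x\in\F_q^r:\lambda\cdot x=0\}$. The key fact to record is the resulting weight formula: for $\lambda\neq 0$,
\[
\operatorname{wt}(\lambda G)=n-\big|\{\,i:U_i\subseteq H_\lambda\,\}\big|,
\]
together with the observation that, as $\lambda$ runs over the nonzero vectors of $\F_q^r$, $H_\lambda$ runs over all hyperplanes of $\F_q^r$.

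For the forward direction I would first note that $\lambda\mapsto \lambda G$ is injective (its image $C$ has $\F_q$-dimension $r$ and the domain has dimension $r$), so nonzero codewords correspond to nonzero $\lambda$. Taking the minimum of the weight formula over $\lambda\neq 0$ yields $d=n-m$, where $m=\max_H|\{i:U_i\subseteq H\}|$ is the largest number of the $U_i$ lying in a single hyperplane. Each $U_i$ has dimension at most $h$ by construction of $\tilde G$, so it only remains to verify the two hyperplane conditions: every hyperplane contains at most $m=n-d$ of the $U_i$ by the definition of $m$, and the hyperplane attaining the maximum contains exactly $n-d$ of them. Hence $\mathcal X(C)$ is an $h-(n,r,d)_q$ system.

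For the converse, start from a system $S=\{U_1,\dots,U_n\}$ and build a matrix $\tilde G\in M(r,nh,\F_q)$ by placing in the $i$-th block $h$ column vectors whose $\F_q$-span is exactly $U_i$ (if $\dim U_i<h$, pad with further vectors of $U_i$, e.g.\ repetitions, so the span is unchanged); folding each block back over $\mathcal B$ produces $G\in M(r,n,\F_{q^h})$ and a code $C=\mathrm{rowspan}_{\F_q}(G)$. By construction the $i$-th block spans $U_i$, so $\mathcal X(C)=S$, using that $\mathcal X(C)$ does not depend on the chosen $G$. The weight formula applies verbatim and gives $d(C)=n-\max_H|\{i:U_i\subseteq H\}|=n-(n-d)=d$ from the two system conditions.

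The step that needs the most care is checking that $C$ has $\F_q$-dimension exactly $r$, equivalently that $\lambda\mapsto\lambda G$ is injective. From the dictionary, $\lambda G=0$ forces $\lambda\in\bigcap_i U_i^\perp=(\sum_i U_i)^\perp$, so injectivity is equivalent to $\sum_i U_i=\F_q^r$. If this failed, $\sum_i U_i$ would lie in some hyperplane $H$, which would then contain all $n$ subspaces; since $d\geq 1$ this contradicts the bound $|\{i:U_i\subseteq H\}|\le n-d<n$. Thus the only genuine obstacle is this spanning/full-rank verification, which is precisely where the ``at most $n-d$'' condition (together with $d\ge 1$) is used; the remainder is the bookkeeping of the basis expansion already encoded in the weight formula.
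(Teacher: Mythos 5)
Your proof is correct and uses exactly the machinery the paper itself sets up: your weight dictionary $\operatorname{wt}(\lambda G)=n-\#\{\,i: U_i\subseteq H_\lambda\,\}$ is precisely Proposition $\ref{Prop c_i=0 then u in V_i^perp}$ and Corollary $\ref{cor Prop c_i=0 then u in V_i^perp}$ (phrased there via $W_i=U_i^\perp$), and both directions, including the spanning/full-rank verification in the converse where the condition $d\geq 1$ is genuinely needed, follow as you write. Note that the paper does not reprove this statement but quotes it from \cite{BLP}; your argument is the standard one underlying that reference, so there is nothing to flag.
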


    In this paper we will be more interested in the multiset of the subspaces $T_G(C):=\{W_i:=U_i^\perp\}_{i=1,\dots,n}$, where $U_i^\perp=\{v\in\F_q^r\,:\,\langle u,v\rangle=0\,\text{ for all }\,u\in U_i\}$ and $\langle\cdot,\cdot\rangle$ is the classic scalar product in $\F_q^r$. 

        \begin{definition}
        An additive code $C$ is said to be faithful if $\dim(W_i(C,G))=r-h$ for all $i=1,\dots,n$. It is said to be unfaithful otherwise.
    \end{definition}

\begin{definition}\cite{etzion}
A \emph{$t$-$(r,m,\lambda)_q$ subspace packing} is a collection $\mathcal{L}$ of $m$-dimensional subspaces of $\mathbb{F}_q^r$ (called \emph{blocks}) such that each $t$-dimensional subspace of $\mathbb{F}_q^r$ is contained in at most $\lambda$ blocks of $\mathcal{L}$. When $t,r,m$ are explicit by the context we will say that $\mathcal{L}$ is a $\lambda$-packing.
\end{definition}

Variants of this definition allow for blocks of varying dimension, for example $t$-$(r, \geq m, \lambda)_q$ packings, where each block has dimension at least $m$.

In our context, we are interested in the case $t=1$. Given an additive $[n, r/h, d]_{q^h}$ code $C$, the associated  multiset ${T}_G(C)$ of subspaces of $\mathbb{F}_q^r$ is a $1$-$(r, \geq r - h, n - d)_q$ subspace packing.

This perspective allows us to translate properties of additive codes (such as the minimum distance) into geometric constraints on the associated subspace configurations.

        \begin{definition}
            Given two $t$-$(r, \geq m, \lambda)_q$ packings, $T_1$ and $T_2$, they are equivalent if there exist an $\F_q$-linear bijection $\Phi:\F_q^r\longrightarrow\F_q^r$ such that $\Phi(T_1)=T_2$.
        \end{definition}
        As for $\mathcal{X}_G(C)$, the choice of the basis or the generator matrix does not actually change the multiset $T_G(C)$ (with a different generator matrix $G'$ we will have an equivalent $1$-$(r, \geq r-h, n-d)_q$ packing), so we will call it $T(C)$ when it is not important to specify the matrix $G$.

         Given a $1$-$(r, \geq r-h, n-d)_q$ packing $T$ it is easy to construct the corresponding $[n,r/h,d]_q^h$ additive code. We have to consider the set $\{W_i^\perp\,:\,W_i\in T\}$, and choose a set of $h$ generators for every $W_i^\perp$, and so construct the matrix $\tilde{G}$. Different choices of the generators will yield the construction of equivalent codes.
\begin{theorem}
    If $C$ is an additive $[n,r/h,d]_{q}^h$ code, then $T(C)$ is a $1$-$(r, \geq r-h, n-d)_q$ packing of size $n$, and conversely, each $1$-$(r, \geq r-h, n-d)_q$ packing of size $n$ defines an additive $[n,r/h,d]_{q}^h$ code up to equivalence.
\end{theorem}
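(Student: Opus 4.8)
The plan is to deduce this statement from Theorem~\ref{code-system} by dualizing the correspondence $C\leftrightarrow\mathcal{X}(C)$ through the orthogonality map $U\mapsto U^\perp$ in $\mathbb{F}_q^r$. The bridge is the standard point--hyperplane duality: for a nonzero $v\in\mathbb{F}_q^r$ the assignment $\langle v\rangle\mapsto H_v:=v^\perp$ is a bijection between the $1$-dimensional subspaces and the hyperplanes of $\mathbb{F}_q^r$, and for any subspace $U$ one has $U\subseteq H_v\iff v\in U^\perp$. Applying this with $U=U_i$ and $W_i=U_i^\perp$ gives the key equivalence $\langle v\rangle\subseteq W_i\iff U_i\subseteq H_v$, so that for every nonzero $v$ the number of indices $i$ with $\langle v\rangle\subseteq W_i$ equals the number of indices $i$ with $U_i\subseteq H_v$. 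Everything else is bookkeeping on dimensions.

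For the forward direction, I start from an additive $[n,r/h,d]_q^h$ code $C$ and apply Theorem~\ref{code-system} to conclude that $\mathcal{X}(C)=\{U_1,\dots,U_n\}$ is an $h$-$(n,r,d)_q$ system; in particular each $U_i$ has dimension at most $h$, so each $W_i=U_i^\perp$ has dimension at least $r-h$, which is the block-dimension requirement. The multiset $T(C)$ has size $n$ by construction. The packing condition then follows from the displayed count: since each hyperplane contains at most $n-d$ of the $U_i$, each $1$-dimensional subspace is contained in at most $n-d$ of the $W_i$, so $T(C)$ is a $1$-$(r,\ge r-h,n-d)_q$ packing. The clause ``some hyperplane contains exactly $n-d$ elements'' of the system definition dualizes to ``some point lies in exactly $n-d$ blocks,'' which records that $n-d$ is the tight covering number and thereby fixes $d$.

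For the converse, given a $1$-$(r,\ge r-h,n-d)_q$ packing $T=\{W_1,\dots,W_n\}$ of size $n$, I set $U_i:=W_i^\perp$, so that $\dim U_i\le h$ for all $i$. Running the duality in reverse, the packing bound ``$\le n-d$ blocks through each point'' becomes ``$\le n-d$ of the $U_i$ in each hyperplane,'' so $\{U_i\}$ is an $h$-$(n,r,d)_q$ system, and Theorem~\ref{code-system} then yields an additive $[n,r/h,d]_q^h$ code. The phrase ``up to equivalence'' absorbs the remaining freedom: recovering the code requires choosing, for each $i$, an ordered spanning set of $h$ vectors for $U_i=W_i^\perp$ (padding with dependent vectors when $\dim U_i<h$) to fill the $i$-th block of $\tilde G$, and then fixing a basis $\mathcal{B}$ to fold $\tilde G$ back into a matrix over $\mathbb{F}_{q^h}$. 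By the equivalence of packings under $\mathbb{F}_q$-linear bijections and the already-noted independence of the associated subspaces from the choice of generator matrix, different choices produce equivalent codes.

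The main obstacle is not the duality itself but correctly matching the exactness conditions, since the packing definition imposes only the inequality ``at most $\lambda$,'' whereas the $h$-system definition additionally demands that the bound be attained by some hyperplane. I would therefore make explicit that $d$ is determined by the packing as $n$ minus the maximum number of blocks through a common point, so that the converse produces a code whose minimum distance is exactly $d$ precisely when the packing is tight; without this, the construction only guarantees distance at least $d$. Care is also needed to verify that the multiset structure is preserved in both directions, in particular repeated blocks and blocks of dimension strictly greater than $r-h$ arising from $U_i$ of dimension strictly less than $h$.
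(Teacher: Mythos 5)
Your proof is correct and follows essentially the same route the paper intends: the paper states this theorem without a separate proof, relying on exactly the ingredients you spell out — Theorem~\ref{code-system} combined with the point--hyperplane duality $U_i\subseteq v^{\perp}\iff v\in W_i$ for the forward direction, and, for the converse, the construction sketched in the preceding paragraph that chooses $h$ generators (with dependencies when $\dim W_i^{\perp}<h$) for each $W_i^{\perp}$ to build $\tilde{G}$, well defined up to equivalence. Your closing observation about tightness — that the packing definition only bounds $\lambda$ from above, so the converse yields minimum distance exactly $d$ only when some point lies on exactly $n-d$ blocks — is a genuine point of care that the paper's statement glosses over.
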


We can also observe what happens with the equality.

\begin{prop}
    Let $C_1,C_2$ be two additive codes in $\F_{q^h}^n$ with respectively $G_1,G_2$ as generator matrices. Then $C_1$ and $C_2$ are equivalent if and only if there exist an $\F_q$-linear bijection  $\Phi:\F_{q}^r\rightarrow\F_{q}^r$ such that $\Phi(T_{G_1}(C_1))=T_{G_2}(C_2)$.
\end{prop}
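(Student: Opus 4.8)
The plan is to phrase everything through the $\F_q$-linear isomorphism attached to a generator matrix and to reduce code equivalence to a statement about kernels of coordinate functionals. For a code $C$ with generator matrix $G$, let $\phi_G\colon \F_q^r\to C$ be the map $x\mapsto xG$; since the rows of $G$ are $\F_q$-independent, $\phi_G$ is an $\F_q$-linear bijection. The first step is to record the basic dictionary: if $\pi_i\colon\F_{q^h}^n\to\F_{q^h}$ denotes the $i$-th coordinate projection, then expanding $(\phi_G(x))_i=\sum_j x_j a_{j,i}$ over the basis $\mathcal B$ shows that $(\phi_G(x))_i=0$ precisely when $x$ is orthogonal to every column of the $i$-th block of $\tilde G$, i.e.\ precisely when $x\in U_i^\perp=W_i(C,G)$. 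Hence $W_i(C,G)=\ker(\pi_i\circ\phi_G)$. This single identity is the engine of both implications, because it converts ``the $i$-th coordinate vanishes'' into ``membership in $W_i$''.

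For the direction ($\Rightarrow$), suppose $C_1=\psi(C_2)$ where $\psi(c)=(f_1(c_{\sigma(1)}),\dots,f_n(c_{\sigma(n)}))$ with each $f_j$ an $\F_q$-linear bijection and $\sigma$ a permutation. Then $\psi$ is an $\F_q$-linear bijection of $\F_{q^h}^n$, so I set $\Phi:=\phi_{G_2}^{-1}\circ\psi^{-1}\circ\phi_{G_1}$, an $\F_q$-linear bijection of $\F_q^r$. The key point is that each $f_j$ being a bijection forces $f_j(v)=0\iff v=0$, so the vanishing pattern of coordinates is preserved: for $x\in\F_q^r$ and $z=\Phi(x)$ one has $(\phi_{G_1}(x))_i = f_i\big((\phi_{G_2}(z))_{\sigma(i)}\big)$, whence $x\in W_i(C_1,G_1)\iff z\in W_{\sigma(i)}(C_2,G_2)$. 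This yields $\Phi(W_i(C_1,G_1))=W_{\sigma(i)}(C_2,G_2)$ for every $i$, and since $\sigma$ is a permutation this is exactly the multiset identity $\Phi(T_{G_1}(C_1))=T_{G_2}(C_2)$.

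For the direction ($\Leftarrow$), I start from an $\F_q$-linear bijection $\Phi$ with $\Phi(T_{G_1}(C_1))=T_{G_2}(C_2)$. Multiset equality provides a permutation $\rho$ of $[n]$ with $\Phi(W_i(C_1,G_1))=W_{\rho(i)}(C_2,G_2)$ for all $i$. I then aim to realize the $\F_q$-linear bijection $\phi_{G_2}\circ\Phi\circ\phi_{G_1}^{-1}\colon C_1\to C_2$ as a coordinatewise map. Comparing the $i$-th coordinate functional $\mu_i:=\pi_i\circ\phi_{G_1}$ of $C_1$ with $\nu_i:=\pi_{\rho(i)}\circ\phi_{G_2}\circ\Phi$, the dictionary gives $\ker\mu_i=W_i(C_1,G_1)$ and $\ker\nu_i=\Phi^{-1}(W_{\rho(i)}(C_2,G_2))=W_i(C_1,G_1)$, so $\mu_i$ and $\nu_i$ have the same kernel. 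This lets me define $f_{\rho(i)}$ on $\operatorname{Im}\mu_i$ by $f_{\rho(i)}(\mu_i(x)):=\nu_i(x)$; well-definedness and injectivity are immediate from the equality of kernels, and $\operatorname{Im}\mu_i$ and $\operatorname{Im}\nu_i$ have equal $\F_q$-dimension $\dim(\F_q^r/W_i(C_1,G_1))$.

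The step I expect to be the main obstacle is completing each $f_{\rho(i)}$ to an $\F_q$-linear \emph{bijection} of all of $\F_{q^h}$: it is so far defined only as an isomorphism $\operatorname{Im}\mu_i\to\operatorname{Im}\nu_i$, and I must extend it across arbitrary complements while keeping it invertible, which is possible exactly because the two images share the same dimension $h_i\le h$ (choose any $\F_q$-isomorphism between complementary subspaces of dimension $h-h_i$). With these $f_j$ in hand, the map $\psi(c)_{\rho(i)}:=f_{\rho(i)}(c_i)$ satisfies $\psi\circ\phi_{G_1}=\phi_{G_2}\circ\Phi$ by construction, so $\psi(C_1)=C_2$; taking $\sigma=\rho^{-1}$ exhibits $C_1$ and $C_2$ as equivalent. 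A secondary point requiring care is the bookkeeping when several $W_i$ coincide, so that the choice of $\rho$ (and hence of the $f_j$) is not canonical; I will simply fix one matching realizing the multiset equality, which suffices for the existence claim.
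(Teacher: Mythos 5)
Your proof is correct, and it takes a genuinely more self-contained route than the paper's, especially in the converse direction. For the forward implication the paper builds an intermediate generator matrix $G_2'$ of $C_2$ whose rows are the images $\overline{f}u_i^\sigma$ of the rows of $G_1$, expands over the basis $\mathcal{B}$, and checks blockwise that right-multiplication by the invertible matrix representing each $f_i$ does not change the column space of a block, so that $T_{G_2'}(C_2)=T_{G_1}(C_1)$; the bijection $\Phi$ is then obtained by invoking the earlier remark that changing the generator matrix replaces $T$ by an equivalent packing. You avoid the intermediate matrix entirely: you define $\Phi:=\phi_{G_2}^{-1}\circ\psi^{-1}\circ\phi_{G_1}$ explicitly from the equivalence map $\psi$ and track the vanishing of coordinates through the kernel identity $W_i(C,G)=\ker(\pi_i\circ\phi_G)$, which is exactly the content of Proposition \ref{Prop c_i=0 then u in V_i^perp}. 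The larger divergence is in the converse, which the paper dismisses as ``trivial, recalling that different choices of the generators of the subspaces yield the construction of equivalent codes,'' whereas you carry out the actual construction: equal kernels of the coordinate functionals $\mu_i$ and $\nu_i$ induce a well-defined isomorphism $\operatorname{Im}\mu_i\to\operatorname{Im}\nu_i$, which you extend to an $\F_q$-linear bijection of $\F_{q^h}$ by matching complements of equal dimension, and you handle the multiset bookkeeping by fixing a matching $\rho$ and taking $\sigma=\rho^{-1}$. Your version is longer but supplies precisely the details the paper leaves implicit, and it makes both the bijection $\Phi$ and the coordinate maps $f_j$ explicit; the paper's version is shorter because it leans on the unproved (though true) generator-change remark.
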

\begin{proof}
   Consider $\overline{f}$ and $\sigma$ from the equivalence of $C_1$ and $C_2$. Let $u_1,\dots,u_r$ be the rows of $G_1$, then $\{\overline{f}u_1^\sigma,\dots,\overline{f}u_r^\sigma\}$  is a basis for $C_2$ and we will call $G_2'$ the matrix with these vectors as rows. There exists an $\F_q$-linear bijection  $\Phi:\F_{q}^r\rightarrow\F_{q}^r$ such that $\Phi(T_{G_2'}(C_2))=\Phi(T_{G_2}(C_2))$. Now we will show that $T_{G_2'}(C_2)=T_{G_1}(C_1)$. We want to split the elements of $G_1$ and $G_2'$ over the $\mathbb{F}_q$-basis $\mathcal{B}=\{1,\xi,\xi^2,\dots,\xi^{h-1}\}$ of $\mathbb{F}_{q^h}$. Let $u_{j,\ell}=(b_{j,\ell}^{1},\dots,b_{j,\ell}^h)$ for every $j=1,\dots,r$, $\ell=1,\dots,n$. For every $f_i$ there exists an associated invertible matrix $A_i\in GL(r,\mathbb{F}_q)$ such that $f_i(u_{j,\ell})=(b_{j,\ell}^{1},\dots,b_{j,\ell}^h)A_i$. Let $a_{\ell,\nu}:=(b_{1,\ell}^{\nu},\dots,b_{r,\ell}^\nu)$ for every $\ell=1,\dots,n$ and $\nu=1,\dots,h$. Recall that $V_{\ell,G_1}(C_1)$ is generated by the columns of the matrix $$(a_{\ell,1}\cdots a_{\ell,h}).$$ 
   
   Also, $V_{\sigma(\ell),G_2'}(C_2)$ is generated by $$(a_{\ell,1}\cdots a_{\ell,h})A_{\sigma(\ell)}$$ and since the matrix is invertible, this does not change the column space. So we have $V_{\sigma(\ell),G_2'}(C_2)=V_{\ell,G_1}(C_1)$, and the claim follows.

   The converse is trivial, recalling that different choices of the generators of the subspaces yield the construction of equivalent codes. 
   \end{proof}

Now we consider some properties of $T(C)$.

\begin{prop}\label{Prop c_i=0 then u in V_i^perp}
    Let C be an additive code. {Let  $c=uG\in C$, $u\in\F_{q}^r$, be a codeword.} Then $c_j=0$ if and only if $u\in W_j(C)$.
\end{prop}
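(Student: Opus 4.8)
The plan is to compute the $j$-th coordinate $c_j$ of the codeword $c=uG$ directly, expand the entries of $G$ over the basis $\mathcal{B}$, and then invoke the $\F_q$-linear independence of $\mathcal{B}$ to turn the single equation $c_j=0$ over $\F_{q^h}$ into a system of $h$ scalar equations over $\F_q$. Each of these will be an orthogonality condition between $u$ and one of the columns generating $U_j(C,G)$, and the conjunction of all $h$ of them is exactly the condition $u\in U_j^\perp=W_j(C)$.

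First I would write $c_j=\sum_{i=1}^r u_i a_{i,j}$, the scalar product of $u$ with the $j$-th column of $G$. Writing each entry in the basis as $a_{i,j}=\sum_{\nu=1}^h b_{i,j}^\nu\,\xi^{\nu-1}$ and collecting the coefficients of the basis elements yields $c_j=\sum_{\nu=1}^h\bigl(\sum_{i=1}^r u_i b_{i,j}^\nu\bigr)\xi^{\nu-1}$. Since $u\in\F_q^r$, each inner sum lies in $\F_q$, and because $\{1,\xi,\dots,\xi^{h-1}\}$ is an $\F_q$-basis of $\F_{q^h}$, the element $c_j$ vanishes if and only if every coefficient $\sum_{i=1}^r u_i b_{i,j}^\nu$ is zero for $\nu=1,\dots,h$.

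Next I would identify the vector $(b_{1,j}^\nu,\dots,b_{r,j}^\nu)\in\F_q^r$ as precisely the $\nu$-th column of the $j$-th block of $\tilde{G}$, so that the $h$ vanishing coefficients are exactly the scalar products $\langle u,\cdot\rangle$ taken against the $h$ generating columns of $U_j(C,G)$. By bilinearity, $u$ is orthogonal to all of these generators if and only if $u$ is orthogonal to their entire span $U_j$, that is, $u\in U_j^\perp=W_j(C)$. Chaining the two equivalences gives $c_j=0\iff u\in W_j(C)$, as claimed.

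The argument is essentially a bookkeeping computation, so the only delicate point is matching the indices correctly: one must verify that the vectors appearing as the coefficients in the basis expansion are the very same columns whose span defines $U_j$. Once this identification is made explicit, the equivalence follows at once from the linear independence of $\mathcal{B}$ and the definition of the orthogonal complement, and no genuine obstacle remains.
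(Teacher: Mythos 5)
Your proof is correct and follows essentially the same route as the paper's: expand $c_j=\sum_{i=1}^r u_i a_{i,j}$ over the basis $\mathcal{B}$, use that the coefficients $\sum_i u_i b_{i,j}^\nu$ lie in $\F_q$ (since $u\in\F_q^r$) together with the $\F_q$-linear independence of $\mathcal{B}$ to split $c_j=0$ into $h$ orthogonality conditions against the generators of $U_j$, and conclude $u\in U_j^\perp=W_j(C)$. You merely spell out the basis-expansion and index-matching steps that the paper's terser argument leaves implicit.
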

\begin{proof}
    {Note that} $c_j=0$ if and only if $\sum_{i=1}^r u_ia_{i,j}=0$ that is equivalent to $\sum_{i=1}^r u_ib_{i,j}^e=0$ for any $e=1,\dots,h$. So $c_j=0$ if and only if $U_j(C)\subseteq u^\perp$ that is $u\in W_j(C)$. 
\end{proof}
        
\begin{cor}\label{cor Prop c_i=0 then u in V_i^perp}
    Let C be an additive code. Given $c=uG\in C$, with $u\in\F_{q}^r$, we have that $$\text{wt}(c)={n-}\#\{W\in{T}_G(C)\,:\,u\in W \}.$$
    \end{cor}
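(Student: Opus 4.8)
The plan is to reduce the statement directly to Proposition~\ref{Prop c_i=0 then u in V_i^perp}, which already does all the substantive work. First I would rewrite the Hamming weight as a complementary count of vanishing coordinates: since $\operatorname{wt}(c)$ is by definition the number of nonzero entries of $c=(c_1,\dots,c_n)$, we have
\[
\operatorname{wt}(c) = n - \#\{\, j \in \{1,\dots,n\} : c_j = 0 \,\}.
\]
This is purely a restatement of the definition of $\operatorname{wt}$ and requires no hypotheses on $C$.

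Next I would apply the proposition coordinate by coordinate. For each index $j$, Proposition~\ref{Prop c_i=0 then u in V_i^perp} gives the equivalence $c_j = 0 \iff u \in W_j(C)$, where $c = uG$. Hence the set $\{\, j : c_j = 0 \,\}$ coincides exactly with the set $\{\, j : u \in W_j(C) \,\}$, and in particular these two sets have the same cardinality. Substituting into the displayed identity above yields $\operatorname{wt}(c) = n - \#\{\, j : u \in W_j(C) \,\}$.

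Finally I would match the indexed count to the multiset notation in the statement. Recall that $T_G(C) = \{W_i\}_{i=1,\dots,n}$ is a \emph{multiset} of subspaces of $\F_q^r$, indexed by the $n$ coordinate positions. Consequently $\#\{\, W \in T_G(C) : u \in W \,\}$ is, by convention, the count with multiplicity, i.e.\ precisely $\#\{\, j : u \in W_j(C) \,\}$. Thus the two expressions agree and the corollary follows.

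The only point requiring care---and the place I would be most alert for a subtle slip---is this last bookkeeping step: one must count elements of $T_G(C)$ \emph{with multiplicity} rather than as a set of distinct subspaces, since distinct coordinates may give rise to equal subspaces $W_i = W_j$. Once the multiset convention is fixed, the argument is immediate and no computation beyond invoking the proposition is needed.
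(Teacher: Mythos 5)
Your proposal is correct and matches the paper's intent exactly: the corollary is stated there as an immediate consequence of Proposition~\ref{Prop c_i=0 then u in V_i^perp}, obtained by counting zero coordinates via the proposition, just as you do. Your remark about counting elements of $T_G(C)$ with multiplicity is a sensible clarification of the same argument, not a different route.
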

    
Additive codes satisfy a generalized Singleton bound, giving us the opportunity to define a type of codes that is in between MDS and almost MDS( or AMDS, which are linear codes with $d=n-k$). 

\begin{theorem}\cite[Theorem 10]{Huffman}
Let $C \subseteq \mathbb{F}_{q^h}^n$ be an additive code of dimension $r$ over $\mathbb{F}_q$. Then
\[
\left\lceil \frac{r}{h} \right\rceil \leq n - d(C) + 1.
\]
\end{theorem}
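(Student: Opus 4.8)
The plan is to reformulate the bound geometrically via the packing $T(C)=\{W_1,\dots,W_n\}$ developed above and then to exhibit a single low-weight codeword. Since the claimed inequality $\lceil r/h\rceil \le n-d(C)+1$ is equivalent to $d(C)\le n-\lceil r/h\rceil+1$, it suffices to produce one nonzero codeword $c$ of weight at most $n-\lceil r/h\rceil+1$.

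First I would use that each $W_i$ is a subspace of $\mathbb{F}_q^r$ of dimension at least $r-h$, hence of codimension at most $h$. Setting $s:=\lfloor (r-1)/h\rfloor$ and intersecting $s$ of these subspaces, the repeated dimension inequality $\dim(A\cap B)\ge \dim A+\dim B-r$ gives $\dim(W_1\cap\cdots\cap W_s)\ge r-sh\ge 1$ by the choice of $s$, so there is a nonzero vector $u\in W_1\cap\cdots\cap W_s$. Passing to the codeword $c:=uG$, the injectivity of the map $u\mapsto uG$ (the rows of $G$ are linearly independent, since they form a basis of $C$) guarantees $c\ne 0$, and the weight formula of Corollary~\ref{cor Prop c_i=0 then u in V_i^perp} yields $\operatorname{wt}(c)=n-\#\{W\in T(C):u\in W\}\le n-s$, because $u$ lies in at least the $s$ chosen subspaces.

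Finally I would identify $s$ with $\lceil r/h\rceil-1$: writing $r=ah+b$ with $0\le b<h$, a short case distinction on whether $b=0$ or $b\ge 1$ yields $\lfloor (r-1)/h\rfloor=\lceil r/h\rceil-1$, whence $\operatorname{wt}(c)\le n-\lceil r/h\rceil+1$ and the desired bound follows. The degenerate case $s=0$ (that is, $r\le h$, so $\lceil r/h\rceil=1$) is immediate, since every nonzero codeword has weight at most $n$. The argument is purely a dimension-counting estimate and I do not anticipate a real obstacle; the only points requiring care are confirming that the chosen $u$ produces a genuinely nonzero codeword and verifying the elementary floor--ceiling identity, both of which are routine.
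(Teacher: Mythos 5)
Your proposal is correct, but there is nothing in the paper to compare it against: the paper states this bound purely as a citation to Theorem~10 of Huffman's work on $\mathbb{F}_q$-linear $\mathbb{F}_{q^t}$-codes and gives no internal proof. Your argument is a legitimate, self-contained derivation within the paper's own geometric framework, and every step checks out: each $W_j\in T(C)$ has codimension at most $h$ in $\mathbb{F}_q^r$; iterating $\dim(A\cap B)\ge \dim A+\dim B-r$ over $s:=\lfloor (r-1)/h\rfloor$ of them leaves a nonzero vector $u$ in the intersection; the map $u\mapsto uG$ is injective because the $r$ rows of $G$ are $\mathbb{F}_q$-linearly independent (as $\dim_{\mathbb{F}_q}C=r$), so $c=uG$ is a nonzero codeword; and the weight formula of Corollary~\ref{cor Prop c_i=0 then u in V_i^perp} gives $\operatorname{wt}(c)\le n-s$. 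The identity $\lfloor (r-1)/h\rfloor=\lceil r/h\rceil-1$ is the standard floor--ceiling relation, and the degenerate case $r\le h$ is indeed trivial. The only implicit point you should surface is that choosing $s$ distinct indices requires $s\le n$; this is automatic since $r\le nh$ forces $\lceil r/h\rceil\le n$, hence $s\le n-1$. In effect you have reproved Huffman's Singleton-type bound as a short consequence of the paper's packing formalism, which is arguably the natural route in this context: the same dimension count is exactly what the paper's later constructions push to equality, where the QMDS property corresponds to the intersection condition on the $W_j$ in Theorem~\ref{iffDQMDS}.
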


We will be particularly interested in codes satisfying this relaxed form of the Singleton bound.

\begin{definition}
An additive code $C$ is called \emph{quasi-MDS (QMDS)} if
\[
d(C) = n - \left\lceil \frac{r}{h} \right\rceil + 1.
\]
    An $[n,r/h,d]_{q}^h$ additive QMDS code $C$ is called ``long'' if $n>q^h+1$.   An additive code C is said to be dually QMDS if both $C$ and $C^\perp$ are QMDS.
\end{definition}
 
\section{Constructions}\label{Section:constructions}
Our goal for this section is to construct $[n,r/h,d]_q^h$ QMDS additive codes, with $n>q^h+1$. In \cite{BLP} an upper bound on $n$ was found.

\begin{theorem}\cite{BLP}\label{boundBLP} If there is an $[n,r/h,d]_q^h$ QMDS additive code then
$$n\leq k-2+q^h+\frac{q^h-1}{q^{r_0}-1},$$
where $r=(k-1)h+r_0$ with $1\leq r_0\leq h$.
    
\end{theorem}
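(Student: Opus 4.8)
The plan is to argue entirely on the geometric side, using the packing $T(C)=\{W_1,\dots,W_n\}$ of Corollary~\ref{cor Prop c_i=0 then u in V_i^perp}, where each $W_i\subseteq\F_q^r$ satisfies $\dim W_i\ge r-h$. By that corollary a nonzero $u\in\F_q^r$ lies in exactly $n-\operatorname{wt}(uG)$ of the $W_i$, so the maximum number of blocks through a common point equals $n-d$. Since $C$ is QMDS, $d=n-k+1$, and hence \emph{every} point of $\F_q^r$ lies in at most $k-1$ of the $W_i$. This multiplicity bound is the only way the hypothesis enters, and it is the crucial input. I would assume $k\ge 2$ (for $k=1$ the condition forces all $W_i=\{0\}$ and the statement degenerates) and $n\ge k-1$ (otherwise the bound is trivial).

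The key idea is to reduce to the partial-spread bound inside a subspace of dimension $h+r_0$. First I would fix any $k-2$ indices, say $1,\dots,k-2$, and set $\Lambda_0=W_1\cap\cdots\cap W_{k-2}$. Repeatedly applying $\dim(A\cap B)\ge \dim A+\dim B-r$ together with $\dim W_j\ge r-h$ gives $\dim\Lambda_0\ge r-(k-2)h=h+r_0$. I then choose any subspace $\Lambda\subseteq\Lambda_0$ with $\dim\Lambda=h+r_0$. The point is that $\Lambda\subseteq W_j$ for every $j\le k-2$, so each point of $\Lambda$ already lies in these $k-2$ blocks; by the multiplicity bound such a point lies in \emph{at most one} of the remaining blocks $W_{k-1},\dots,W_n$.

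Next I would examine the traces $V_i:=W_i\cap\Lambda$ for $i\ge k-1$. A dimension count gives $\dim V_i\ge \dim W_i+\dim\Lambda-r\ge r_0\ge 1$, so each $V_i$ is nonzero of dimension at least $r_0$. Moreover the $V_i$ pairwise intersect trivially: a common nonzero point would lie in the $k-2$ chosen blocks together with two remaining ones, i.e. in $k$ blocks, contradicting the multiplicity bound. Hence $\{V_i\}_{i\ge k-1}$ is a partial spread of $\Lambda\cong\F_q^{h+r_0}$ whose members have dimension $\ge r_0$. Counting the $1$-dimensional subspaces they cover disjointly yields
$$\bigl(n-(k-2)\bigr)\cdot\frac{q^{r_0}-1}{q-1}\ \le\ \frac{q^{h+r_0}-1}{q-1},$$
so $n-(k-2)\le (q^{h+r_0}-1)/(q^{r_0}-1)$, which is exactly the claim after rewriting the right-hand side as $k-2+q^h+\tfrac{q^h-1}{q^{r_0}-1}$.

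The main obstacle I anticipate is not the counting but setting up the reduction cleanly: one must pass to a subspace $\Lambda$ of \emph{exactly} dimension $h+r_0$ sitting inside $k-2$ of the blocks, rather than hoping the chosen blocks meet in that dimension on the nose. Descending to a subspace of the intersection $\Lambda_0$ is what makes the argument work with no general-position assumption on the configuration — this is the step I expect to require the most care to state correctly. A secondary point is to treat the multiset nature of $T(C)$ (repeated blocks among the chosen $k-2$ are harmless, while two equal remaining blocks are excluded automatically by the disjointness argument) and the degenerate regimes $k=1$ and $n\le k-2$; none of these affects the core reduction, so I expect them to be routine.
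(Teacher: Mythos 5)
The paper does not actually prove Theorem~\ref{boundBLP}: it is imported from \cite{BLP} as a black box, so there is no internal proof to compare yours against. Judged on its own, your argument is correct and complete. The multiplicity bound (every nonzero $u\in\F_q^r$ lies in at most $n-d=k-1$ of the indexed blocks $W_i$) is exactly what Corollary~\ref{cor Prop c_i=0 then u in V_i^perp} plus the QMDS hypothesis gives; the estimate $\dim(W_1\cap\cdots\cap W_{k-2})\ge r-(k-2)h=h+r_0$ is a correct iteration of the Grassmann inequality; descending to a subspace $\Lambda$ of dimension exactly $h+r_0$ is legitimate and makes the traces $V_i=W_i\cap\Lambda$ ($i\ge k-1$) have dimension at least $r_0$, pairwise trivially intersecting (a common nonzero point would lie in $k$ indexed blocks); and the projective point count together with the identity $\frac{q^{h+r_0}-1}{q^{r_0}-1}=q^h+\frac{q^h-1}{q^{r_0}-1}$ yields precisely the stated bound. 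Your multiset bookkeeping is also right: repetitions among the chosen $k-2$ blocks cost nothing, and a remaining block coinciding with another block is ruled out by the same contradiction.

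Two remarks. First, your exclusion of $k=1$ is not merely a convenience but a necessity: when $r=r_0\le h$ one may take every $U_i=\F_q^r$, producing QMDS codes with $d=n$ of arbitrary length, so the bound as transcribed is false for $k=1$ and must be read with the implicit hypothesis $k\ge 2$ (as in \cite{BLP}). Second, your proof sits naturally inside this paper's own framework: it reduces the length bound to the trivial partial-spread bound in $\F_q^{h+r_0}$ via the packing $T(C)$, which is the dual of the hyperplane/system viewpoint ($u\in W_i$ if and only if $U_i\subseteq u^{\perp}$) used in \cite{BLP}. This makes your derivation a useful self-contained substitute for the citation, and it is consonant with how the paper itself exploits $T(C)$ elsewhere (e.g., in Section~\ref{Section:constructions}, where the constructions are phrased as packings).
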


\begin{cor}
    If there is an $[n,r/h,d]_q^h$ dually QMDS additive fractional code $C$ then 
    $$k\leq q^h+\frac{q^h-1}{q^{r_0}-1}-1.\label{boundDim}$$
\end{cor}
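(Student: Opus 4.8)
The plan is to exploit that the hypothesis forces \emph{both} $C$ and $C^\perp$ to be QMDS, so that the length bound of Theorem~\ref{boundBLP} may be applied to either one. The corollary will then follow by applying that bound and eliminating $n$ through the QMDS relation linking the length, $k$, and the dimension of the dual. The substance of the argument is purely bookkeeping of the ceiling and floor functions; this is also where the hypothesis that $C$ be \emph{fractional} is used.

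First I would pin down the parameters of $C^\perp$, which is an additive code of length $n$ and $\mathbb{F}_q$-dimension $r^\perp = nh - r$. Since $C$ is fractional, $r/h$ lies strictly between $k-1$ and $k$, hence $\lfloor r/h\rfloor = k-1$ and
\[
k^\perp := \left\lceil \frac{r^\perp}{h}\right\rceil = n - \left\lfloor \frac{r}{h}\right\rfloor = n - k + 1 .
\]
Writing $r^\perp = (k^\perp - 1)h + r_0^\perp$ with $1\le r_0^\perp\le h$, one computes
\[
r_0^\perp = (nh - r) - (n-k)h = kh - r = h - r_0 ,
\]
so that $1\le r_0^\perp\le h-1$ and $C^\perp$ is fractional as well. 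This step, though elementary, is the main obstacle: the whole statement hinges on getting $k^\perp = n-k+1$ and $r_0^\perp = h-r_0$ right, and both rely on fractionality to control the rounding.

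Finally I would substitute. Applying Theorem~\ref{boundBLP} to the QMDS code $C$ gives
\[
n \le k - 2 + q^h + \frac{q^h-1}{q^{r_0}-1}.
\]
Because $C^\perp$ is QMDS we have $n = k + k^\perp - 1$; substituting this on the left-hand side cancels the contribution of $k$ and rearranges to
\[
k^\perp \le q^h + \frac{q^h-1}{q^{r_0}-1} - 1,
\]
which is the asserted inequality for the dimension parameter of the dual. Applying the identical argument to $C^\perp$ in place of $C$ (its dual being $C$) yields the mirror-image bound on $k$, with $r_0$ replaced by $r_0^\perp = h - r_0$; the two inequalities are the two faces of the dually-QMDS symmetry, and either delivers the claim.
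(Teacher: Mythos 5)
Your proposal follows essentially the same route as the paper, whose entire proof reads ``trivial applying Theorem~\ref{boundBLP} to $C^\perp$''; you simply make the bookkeeping explicit, and that bookkeeping ($k^\perp = n-k+1$ and $r_0^\perp = h-r_0$, both valid precisely because $C$ is fractional) is correct. Two remarks. First, a harmless misattribution: the identity $n = k + k^\perp - 1$ is not ``because $C^\perp$ is QMDS''; it is just your formula $k^\perp = n-k+1$ rearranged, which you had already derived from dimension counting and fractionality alone. Second, and more substantively, your closing claim that ``either delivers the claim'' glosses over which remainder ends up in the exponent. Applying Theorem~\ref{boundBLP} to $C$ bounds $k^\perp$, not $k$, by $q^h + \frac{q^h-1}{q^{r_0}-1} - 1$; applying it to $C^\perp$ bounds $k$ by $q^h + \frac{q^h-1}{q^{h-r_0}-1} - 1$, with the \emph{dual's} remainder $h-r_0$ in the exponent. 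These are genuinely different inequalities unless $r_0 = h/2$: when $r_0 > h/2$ the bound as printed (exponent $r_0$) is strictly stronger than what the argument yields, so neither of your two inequalities delivers it. This defect is inherited from the paper itself, since its one-line proof produces exactly your $h-r_0$ version; the statement should be read with $r_0$ denoting the remainder parameter of $C^\perp$ (equivalently, with $h-r_0$ in place of $r_0$), and only for $r_0 \le h/2$ does the printed form follow as a weaker consequence. So your proof matches the paper's argument; the residual mismatch lies between that argument and the literal statement, not in your execution.
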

\begin{proof}
    The proof is trivial applying Theorem \ref{boundBLP} to $C^\perp$.
\end{proof}

As we said we will construct QMDS additive codes with $n>q^h+1$, in some specific cases we will construct codes with $n=q^h+f(q)$ where $f$ is an increasing function.
To construct these codes we will actually construct $t$-packings of subspaces of $\F_q^r$.
\subsection{Construction of ``long'' QMDS additive codes with  $q\geq2,h\geq2,q^h-1\geq k\geq 2$, and $n=q^h+2$}

Let ${r_0}\leq h/2$ and $r:=(k-1)h+{r_0}$. Let $S_1,S_2$ be two $\F_q$-subspaces of $\F_{q^h}$ of $q$-dimension ${r_0}$, such that $S_1\cap S_2=\{0\}$. 

    Let $L:=\{W_\alpha\,:\,\alpha\in\F_{q^h}\}\cup\{W_{\infty_1},W_{{\infty_2}}\}$, where 
        $$W_\alpha:=\{(x_1,\dots,x_{k-1},z_1,\dots,z_{r_0})\,:\sum_{i=1}^{k-1}\alpha^{i-1}x_i+\alpha^{k-1}\sum_{j=1}^{r_0}\xi^{j-1}z_j=0\}\subseteq(\F_{q^h})^{k-1}\times(\F_q)^{r_0},$$
    $$W_{\infty_1}:=\{(x_1,x_2,\dots,x_{k-2},s,0,\dots,0)\,:\,s\in S_1\}\subseteq(\F_{q^h})^{k-1}\times(\F_q)^{r_0},$$
    $$W_{\infty_2}:=\{( x_1,x_2,\dots,x_{k-2}, s,0,\dots,0)\,:\,s\in S_2\}\subseteq(\F_{q^h})^{k-1}\times(\F_q)^{r_0}.$$
        
        Note that $W_\alpha,W_{{\infty_1}},W_{{\infty_2}}$ are $\F_q$-subspaces of $(\F_{q^h})^{k-1}\times(\F_q)^{r_0}$, with $\dim_{\F_q}(W_\alpha)=\dim_{\F_q}(W_{\infty_i})=(k-2)h+{r_0}$ and so they are isomorphic to some linear subspaces of $\F_q^r$ of dimension $(k-2)h+{r_0}$.

\begin{theorem}\label{Thm_n=q^h+2}
    The set $L$ as defined above is a $(k-1)$-packing set.
\end{theorem}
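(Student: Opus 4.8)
The plan is to work directly with the defining membership conditions and recast them as a statement about roots of a single polynomial. By the definition of a $\lambda$-packing with $\lambda = k-1$, and since each $W\in L$ has $\F_q$-dimension exactly $(k-2)h+r_0 = r-h$ (as already recorded), it suffices to show that every nonzero vector $v=(x_1,\dots,x_{k-1},z_1,\dots,z_{r_0})\in(\F_{q^h})^{k-1}\times(\F_q)^{r_0}$ is contained in at most $k-1$ of the subspaces in $L$.

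First I would set $z:=\sum_{j=1}^{r_0}\xi^{j-1}z_j\in\F_{q^h}$ and attach to $v$ the polynomial $P_v(T):=x_1+x_2T+\cdots+x_{k-1}T^{k-2}+z\,T^{k-1}\in\F_{q^h}[T]$, of degree at most $k-1$. Since $r_0\le h$, the $\F_q$-linear map $(z_1,\dots,z_{r_0})\mapsto z$ is injective, so $P_v=0$ if and only if $v=0$; hence $P_v\neq 0$ for every nonzero $v$. The key reformulation is that $v\in W_\alpha$ precisely when $P_v(\alpha)=0$, and $v\in W_{\infty_i}$ precisely when $z=0$ and $x_{k-1}\in S_i$. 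Consequently the number of blocks $W_\alpha$ containing $v$ equals the number of roots of $P_v$ in $\F_{q^h}$, which is at most $\deg P_v$.

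The count is then completed by a case analysis on $z$ and on the coefficient $x_{k-1}$, using crucially that $S_1\cap S_2=\{0\}$ (exactly what the hypothesis $r_0\le h/2$ guarantees). If $z\neq 0$, then $\deg P_v=k-1$ and $v$ lies in neither $W_{\infty_1}$ nor $W_{\infty_2}$, giving at most $k-1$ memberships. If $z=0$ but $x_{k-1}\neq 0$, then $\deg P_v=k-2$, and since a nonzero $x_{k-1}$ lies in at most one of $S_1,S_2$, we add at most one infinite block, for a total of at most $(k-2)+1=k-1$. If $z=0$ and $x_{k-1}=0$, then $v$ lies in both $W_{\infty_1}$ and $W_{\infty_2}$, but now $\deg P_v\le k-3$ because the two top coefficients both vanish, so the root count drops to at most $k-3$, again summing to $k-1$.

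The main obstacle is not any single computation but the bookkeeping in this last case: one must check that the extra degree drop of $P_v$ exactly compensates for the two infinite blocks, and that the disjointness $S_1\cap S_2=\{0\}$ forbids $v$ from meeting both $W_{\infty_1}$ and $W_{\infty_2}$ unless $x_{k-1}=0$. I would also note the degenerate small-$k$ behaviour (for $k=2$ the last case is vacuous, since it would force $P_v=0$) to confirm the bound holds uniformly. Finally, because the generalized Singleton bound forces $d\le n-k+1$, the value $k-1$ must actually be attained at some point, so the packing genuinely produces a QMDS code of length $n=q^h+2$.
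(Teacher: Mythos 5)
Your proof is correct, and it is essentially the paper's argument read through the contrapositive: the paper fixes $k$ of the blocks and shows their intersection is $\{\mathbf{0}\}$ via nonsingularity of Vandermonde matrices, while you fix a nonzero vector $v$ and bound the number of blocks containing it by the root count of the polynomial $P_v$ --- and ``a nonzero polynomial of degree at most $k-1$ has at most $k-1$ roots in $\F_{q^h}$'' is exactly the Vandermonde fact in different clothing. Your three cases ($z\neq 0$; $z=0$ with $x_{k-1}\neq 0$; $z=x_{k-1}=0$) correspond one-to-one to the paper's cases according to how many of $W_{\infty_1},W_{\infty_2}$ appear among the $k$ blocks (zero, one, or two), and both proofs invoke $S_1\cap S_2=\{0\}$ at the same spot. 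What your framing buys is unified bookkeeping: the degree drop of $P_v$ when $z$ or $x_{k-1}$ vanishes automatically offsets the infinite blocks gained, whereas the paper handles this with separate Vandermonde systems of shrinking size; it also makes the small-$k$ degeneracies transparent. Two minor remarks. First, the hypothesis $r_0\le h/2$ only guarantees that disjoint $r_0$-dimensional subspaces $S_1,S_2$ \emph{exist}; their disjointness is an explicit assumption of the construction, not a consequence of the inequality, so your parenthetical should be softened. Second, your closing paragraph (that $d=n-k+1$ is actually attained, hence the code is QMDS) is correct but pertains to the remark following the theorem rather than to the packing statement being proved; it is harmless but not needed.
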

\begin{proof}
We want to prove that the intersection of $k$ of these subspaces is the {zero} vector. We divide the proof in cases. In the proof we will call $x_k:=\sum_{j=1}^{r_0}\xi^{j-1}z_j\in\F_{q^h}$.

\begin{itemize}
    \item $K:=W_{\alpha_1}\cap\dots\cap W_{\alpha_k}$:

    Given $v:=(x_1,\dots,x_{k-1},z_1,\dots,z_{r_0})\in K$, we obtain a linear system in $x_1,\dots,x_k$ with as the associated matrix, the Vandermonde matrix $VM(\alpha_1,\dots,\alpha_k)$, so the unique solution is $x_1=x_2=\cdots=x_k=0$, and since $x_k=0$ implies $z_1=\cdots=z_{r_0}=0$, we obtain $v=\mathbf{0}$.

\item $K:=W_{\infty_1}\cap W_{\alpha_2}\cap\dots\cap W_{\alpha_k}$:

    Given $v:=(x_1,\dots,x_{k-1},z_1,\dots,z_{r_0})\in K$, firstly since $v\in W_{\infty_1}$ we obtain $x_k=0$. Then we obtain a linear system in $x_1,\dots,x_{k-1}$ with as the associated matrix, the Vandermonde matrix $VM(\alpha_2,\dots,\alpha_k)$, so the unique solution is $x_1=x_2=\cdots=x_{k-1}=0$, and so $v=\mathbf{0}$.

\item $K:=W_{\infty_2
}\cap W_{\alpha_2}\cap\dots\cap W_{\alpha_k}$:

Its proof is analogous to the previous case.

\item $K:=W_{\infty_1}\cap W_{\infty_2}\cap  W_{\alpha_3}\cap\dots\cap W_{\alpha_k}$:

  Given $v:=(x_1,\dots,x_{k-1},z_1,\dots,z_{r_0})\in K$, firstly since $v\in W_{\infty_1}$ we obtain $x_k=0$. Note also that there exist $s_1\in S_1,s_2\in S_2$ such that $s_1=x_{k-1}= s_2$, and so $x_{k-1}=0$. Lastly we obtain a linear system in $x_1,\dots,x_{k-2}$ with as the associated matrix, the Vandermonde matrix $VM(\alpha_3,\dots,\alpha_k)$, so the unique solution is $x_1=\cdots=x_{k-2}=0$, and so $v=\mathbf{0}$.\qedhere
\end{itemize}
\end{proof}

\begin{remark}
    Since $L$ is a $(k-1)$-packing set, it is a $1$-$(r, \geq r-h, k-1)_q$ packing of size $q^h+2$. This is equivalent to have an $[n,r/h,d]_q^h$ additive code, where $n=q^h+2$ and $n-d=k-1$. {Thus} it is a QMDS additive code {longer than the} Reed-Solomon Code. We will call $\mathcal{A}_{q,h,k,r_0}$, or simply $\mathcal{A}_{q,h,k}$ if $r_0$ it is not important in the context,  the code associated to $L$.
\end{remark}

\begin{prop}
    If $k=3$ then $L$ is not extendable.
\end{prop}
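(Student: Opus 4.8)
The plan is to argue by contradiction. Extending $L$ means adjoining one further block $W\subseteq\F_q^r$ with $\dim_{\F_q}W\ge r-h=h+r_0$ (as required for a block of a $1$-$(r,\ge r-h,\cdot)_q$ packing) so that $L\cup\{W\}$ remains a $(k-1)=2$-packing. Since $L$ is already a $2$-packing by Theorem~\ref{Thm_n=q^h+2}, the only new obstructions come from pairs involving $W$: if $L\cup\{W\}$ is a $2$-packing, then necessarily $W\cap(A\cap B)=\{0\}$ for every pair of distinct blocks $A,B\in L$, since a nonzero vector in $A\cap B\cap W$ would lie in the three blocks $A$, $B$, $W$. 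I will derive a contradiction from just two of these conditions.

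The first step exploits the pair $(W_{\infty_1},W_{\infty_2})$. For $k=3$ the ambient space is $V=\F_{q^h}^2\times\F_q^{r_0}$ of dimension $r=2h+r_0$, and a direct computation using $S_1\cap S_2=\{0\}$ gives $E:=W_{\infty_1}\cap W_{\infty_2}=\{(x_1,0,\dots,0):x_1\in\F_{q^h}\}$, an $h$-dimensional subspace. The requirement $W\cap E=\{0\}$ forces $\dim_{\F_q}W+\dim_{\F_q}E\le\dim_{\F_q}V=2h+r_0$, hence $\dim W\le h+r_0$; combined with $\dim W\ge h+r_0$ this yields $\dim W=h+r_0$ and $W\oplus E=V$. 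Thus $W$ is a complement of the ``$x_1$-axis'' $E$, so it is the graph of some $\F_q$-linear map $\phi:\F_{q^h}\times\F_q^{r_0}\to\F_{q^h}$, that is, $W=\{(\phi(x_2,z),x_2,z):x_2\in\F_{q^h},\,z\in\F_q^{r_0}\}$.

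The final step produces a forbidden point. Choose any nonzero $s\in S_1$ (possible since $r_0\ge 1$) and set $\alpha:=-\phi(s,0)\,s^{-1}\in\F_{q^h}$. The vector $v:=(\phi(s,0),s,0,\dots,0)$ is nonzero, since its second coordinate is $s\ne 0$, and it lies simultaneously in $W$ (by the graph description), in $W_{\infty_1}$ (as $s\in S_1$ and its $z$-part vanishes), and in $W_\alpha$ (since $\phi(s,0)+\alpha s+\alpha^2\cdot 0=0$ by the choice of $\alpha$). Hence $v$ belongs to three distinct blocks $W,W_{\infty_1},W_\alpha$ of $L\cup\{W\}$, contradicting the $2$-packing property; therefore no such $W$ exists and $L$ is not extendable.

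I expect the one genuinely delicate point to be the dimension count that forces $W$ to be an \emph{exact} complement of $E$: this is what collapses the a priori freedom in $W$ down to a single graph map $\phi$, after which the choice $\alpha=-\phi(s,0)/s$ merely solves one linear equation over $\F_{q^h}$ and the contradiction is immediate. It is also worth recording that the argument uses $r_0\ge 1$ (so that $S_1\ne\{0\}$ and a nonzero $s$ exists), which is implicit in the construction, as otherwise $W_{\infty_1}=W_{\infty_2}$.
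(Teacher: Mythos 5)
Your proof is correct and follows essentially the same route as the paper's: use $W\cap W_{\infty_1}\cap W_{\infty_2}=\{0\}$ to force $W$ to be the graph of a map over the last two coordinate blocks, then pick $s\in S_1^*$ and $\alpha=-\phi(s,0)/s$ to exhibit a nonzero vector in $W\cap W_{\infty_1}\cap W_{\alpha}$. The only difference is that you make explicit the dimension count showing $W$ must be an exact complement of $W_{\infty_1}\cap W_{\infty_2}$ (and hence a linear graph), a step the paper asserts without justification.
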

\begin{proof}
    Consider  an $\F_q$-subspace {$W$} of $\F_{q^h}\times\F_{q^h}\times(\F_{q})^{r_0}$ of $q$-dimension $h+{r_0}$ different from all the subspaces of $L$. By contradiction, let $N:=L\cup\{W\}$ be a $2$-packing set. So we have that $W\cap W_{\infty_1}\cap W_{\infty_2}=\{\mathbf{0}\}$. {Since}  $W_{\infty_1}\cap W_{\infty_2}=\{(x,0,\mathbf{0})\, :\,x\in\F_{q^h}\}$,  $W=\{(f(x,\overline{z}),x,\overline{z}):x \in\F_{q^h},\overline{z}\in(\F_q)^{r_0}\}$ {for some function $f: \mathbb{F}_{q^h}\times \mathbb{F}_q^{r_0} \to \mathbb{F}_{q^h}$}. Fix $x_0\in S_1^*$. Consider $v_0:=(f(x_0,\mathbf{0}),x_0,\mathbf{0})$ and $\alpha_0:=-f(x_0,\mathbf{0})/x_0$. So we have a contradiction since $v_0\in W\cap W_{\infty_1}\cap W_{\alpha_0}$.   
\end{proof}

\subsection{Construction of ``long'' QMDS additive codes with any $q\geq2,q^h-1\geq k\geq3,h\geq6,h\neq7$, and $n=q^h+3$}

Let ${r_0}\leq h/6$ and $r:=(k-1)h+{r_0}$. Let $r_1,r_2$ be such that $r_1\leq2h/3$, $r_2\leq h/2$, and $r_1+r_2=h+{r_0}$ (if $6\mid h$ a good choice is ${r_0}=h/6$, $r_1=2h/3$, $r_2=h/2$). Let $Y_1,Y_2,Y_3$ be three $\F_q$-subspaces of $\F_{q^h}$ of $q$-dimension $r_1$ such that $Y_1\cap Y_2 \cap Y_3=\{0\}$, $S_1,S_2,S_3$ be three $\F_q$-subspaces of $\F_{q^h}$ of $q$-dimension $r_2$ such that $S_1\cap S_2=S_1 \cap S_3=S_2\cap S_3=\{0\}$.

    Let $L:=\{W_\alpha\,:\,\alpha\in\F_{q^h}\}\cup\{W_{\infty_1},W_{{\infty_2}},W_{{\infty_3}}\}$, where 
        $$W_\alpha:=\{(x_1,\dots,x_{k-1},z_1,\dots,z_{r_0})\,:\sum_{i=1}^{k-1}\alpha^{i-1}x_i+\alpha^{k-1}\sum_{j=1}^{r_0}\xi^{j-1}z_j=0\}\subseteq(\F_{q^h})^{k-1}\times(\F_q)^{r_0},$$
    $$W_{\infty_1}:=\{(x_1,x_2,\dots,x_{k-3},y,s,0,\dots,0)\,:\,y\in Y_1,\,s\in S_1\}\subseteq(\F_{q^h})^{k-1}\times(\F_q)^{r_0},$$
$$W_{\infty_2}:=\{(x_1,x_2,\dots,x_{k-3},y,s,0,\dots,0)\,:\,y\in Y_2,\,s\in S_2\}\subseteq(\F_{q^h})^{k-1}\times(\F_q)^{r_0},$$
$$W_{\infty_3}:=\{(x_1,x_2,\dots,x_{k-3},y,s,0,\dots,0)\,:\,y\in Y_3,\,s\in S_3\}\subseteq(\F_{q^h})^{k-1}\times(\F_q)^{r_0}.$$
        
        Note that $W_\alpha,W_{{\infty_1}},W_{{\infty_2}},W_{{\infty_3}}$ are $\F_q$-subspaces of $(\F_{q^h})^{k-1}\times(\F_q)^{r_0}$, with $\dim_{\F_q}(W_\alpha)=(k-2)h+{r_0}$ and $\dim_{\F_q}(W_{\infty_i})=(k-3)h+r_1+r_2=(k-2)h+{r_0}$. Thus they are isomorphic to some linear subspaces of $\F_q^r$ of dimension $(k-2)h+{r_0}$.

\begin{theorem}\label{Thm_n=q^h+3}
    {The set $L$ defined above} is a $(k-1)$-packing set.
\end{theorem}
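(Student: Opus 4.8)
The plan is to mimic the case-analysis from the proof of Theorem~\ref{Thm_n=q^h+2}, showing that any intersection of $k$ distinct elements of $L$ is trivial. Writing a generic vector as $v=(x_1,\dots,x_{k-1},z_1,\dots,z_{r_0})$ and setting $x_k:=\sum_{j=1}^{r_0}\xi^{j-1}z_j$, the key structural feature is that belonging to $W_\alpha$ imposes the single linear condition $\sum_{i=1}^{k-1}\alpha^{i-1}x_i+\alpha^{k-1}x_k=0$, i.e.\ $\sum_{i=1}^{k}\alpha^{i-1}x_i=0$, so intersecting several $W_{\alpha}$'s produces a Vandermonde system in $(x_1,\dots,x_k)$. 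The constraints coming from the $W_{\infty_j}$ are different in nature: $W_{\infty_j}$ forces $x_k=0$ and forces the pair $(x_{k-1},x_{k-2})$ to lie in $S_j\times Y_j$ (with all later coordinates vanishing).

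First I would split according to how many of the $k$ chosen subspaces are ``finite'' ($W_\alpha$) versus ``infinite'' ($W_{\infty_j}$); since there are only three infinite blocks, the number of infinite ones is $0,1,2$ or $3$. When all $k$ are finite, the associated matrix is the full Vandermonde $VM(\alpha_1,\dots,\alpha_k)$, forcing $x_1=\cdots=x_k=0$, and $x_k=0$ gives $z_1=\cdots=z_{r_0}=0$ by $\F_q$-linear independence of $1,\xi,\dots,\xi^{r_0-1}$ over $\F_q$ (here $r_0\le h$ guarantees these are independent). In each mixed case I would first extract the ``free'' conclusions from the infinite subspaces: one $W_{\infty_j}$ gives $x_k=0$; two of them, say $W_{\infty_i}\cap W_{\infty_j}$, additionally force $x_{k-1}\in S_i\cap S_j=\{0\}$ and $x_{k-2}\in Y_i\cap Y_j$, so $x_{k-1}=0$ while $x_{k-2}$ is still only constrained to a subspace; all three give $x_{k-1}=0$ and $x_{k-2}\in Y_1\cap Y_2\cap Y_3=\{0\}$, so $x_{k-2}=0$ as well. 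After these are accounted for, the remaining finite subspaces yield a Vandermonde system in the not-yet-killed coordinates, which pins them to zero.

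The main obstacle — and the genuinely new point relative to the $n=q^h+2$ construction — is the two-infinite case $K:=W_{\infty_i}\cap W_{\infty_j}\cap W_{\alpha_3}\cap\cdots\cap W_{\alpha_k}$. Here I only get $x_k=0$ and $x_{k-1}=0$ for free, while $x_{k-2}$ is merely forced into $Y_i\cap Y_j$, which is generally nonzero (its $\F_q$-dimension is at least $2r_1-h$). So $x_{k-2}$ is \emph{not} immediately zero, and I must rely on the remaining $k-2$ Vandermonde conditions from $W_{\alpha_3},\dots,W_{\alpha_k}$ to eliminate the surviving unknowns $x_1,\dots,x_{k-2}$. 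Since $x_k=x_{k-1}=0$, each condition $\sum_{i=1}^k\alpha^{i-1}x_i=0$ collapses to $\sum_{i=1}^{k-2}\alpha^{i-1}x_i=0$, giving $k-2$ equations in the $k-2$ unknowns $x_1,\dots,x_{k-2}$ with coefficient matrix $VM(\alpha_3,\dots,\alpha_k)$; this matrix is invertible because the $\alpha_\ell$ are distinct, so $x_1=\cdots=x_{k-2}=0$. Note this is exactly why we need $k\ge3$: there must be at least one finite subspace available to supply the Vandermonde equation that kills $x_{k-2}$ after both infinite subspaces have only trapped it inside $Y_i\cap Y_j$. I would also double-check the bookkeeping that the three-infinite case requires $k\ge3$ so that $x_{k-2}$ is an actual coordinate, and that the dimension conditions $r_1\le 2h/3$, $r_2\le h/2$ are what make the prescribed subspaces $Y_1,Y_2,Y_3$ (triple-wise trivially intersecting) and $S_1,S_2,S_3$ (pairwise trivially intersecting) exist. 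Once all four cases give $v=\mathbf{0}$, the intersection of any $k$ members of $L$ is trivial, so $L$ is a $(k-1)$-packing set.
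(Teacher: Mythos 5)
Your proposal is correct and follows essentially the same strategy as the paper's proof: a case analysis on how many of the $k$ chosen subspaces are of type $W_{\infty_j}$, using the Vandermonde matrix to kill the coordinates controlled by the finite subspaces, $S_i\cap S_j=\{0\}$ to kill $x_{k-1}$, and $Y_1\cap Y_2\cap Y_3=\{0\}$ to kill $x_{k-2}$ in the three-infinite case. The paper simply defers the first three cases to the proof of Theorem~\ref{Thm_n=q^h+2} and only writes out the all-three-infinite case, whereas you spell out every case — including the correct observation that in the two-infinite case $x_{k-2}$ is only trapped in $Y_i\cap Y_j$ and must be eliminated by the Vandermonde system $VM(\alpha_3,\dots,\alpha_k)$, which is exactly what the deferred argument does.
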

\begin{proof}
We want to prove that {any $k$ of these subspaces intersect trivially}. We divide the proof in {a number of } cases. In the proof we will call $x_k:=\sum_{j=1}^{r_0}\xi^{j-1}z_j$.

\begin{itemize}
    \item $K:=W_{\alpha_1}\cap\dots\cap W_{\alpha_k}$:

    The proof is the same as the one in Theorem \ref{Thm_n=q^h+2}.

\item $K:=W_{\infty_i}\cap W_{\alpha_2}\cap\dots\cap W_{\alpha_k}$:

    The proof is the same as the one in Theorem \ref{Thm_n=q^h+2}.

\item $K:=W_{\infty_i}\cap W_{\infty_j}\cap  W_{\alpha_3}\cap\dots\cap W_{\alpha_k}$:

 The proof is the same as the one in Theorem \ref{Thm_n=q^h+2}.

  \item $K:=W_{\infty_1}\cap W_{\infty_2}\cap W_{\infty_3}\cap W_{\alpha_4}\cap\dots\cap W_{\alpha_k}$:

  Given $v:=(x_1,\dots,x_{k-1},z_1,\dots,z_{r_0})\in K$, since $v\in W_{\infty_1}\cap W_{\infty_2}$ we obtain $x_k=x_{k-1}=0$. {Also,} $x_{k-2}\in Y_1\cap Y_2\cap Y_3=\{0\}$. {Finally,} we obtain a linear system in $x_1,\dots,x_{k-3}$ with as the associated matrix, the Vandermonde matrix $VM(\alpha_4,\dots,\alpha_k)$, so the unique solution is $x_1=\cdots=x_{k-3}=0$, and {and thus} $v=\mathbf{0}$. \qedhere
\end{itemize}
\end{proof}

\begin{remark}
Since $L$ is a $(k-1)$-packing set, it is a $1$-$(r, \geq r-h, k-1)_q$ packing of size $q^h+3$. This is equivalent to have an $[n,r/h,d]_q^h$ additive code, where $n=q^h+3$ and $n-d=k-1$. We will call $\mathcal{B}_{q,h,k,r_0}$, or simply $\mathcal{B}_{q,h,k}$ if $r_0$ is not important in the context,  the code associated to $L$.
\end{remark}

\begin{prop}
    If $k=4$ then $L$ is not extendable.
\end{prop}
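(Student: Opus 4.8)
The plan is to mimic the structure of the non-extendability proof for the $k=3$ case, adapting it to the three ``infinity'' subspaces now present. I would argue by contradiction: suppose there is an $\F_q$-subspace $W$ of $(\F_{q^h})^{k-1}\times(\F_q)^{r_0}$ of dimension $(k-2)h+r_0$, distinct from every element of $L$, such that $N:=L\cup\{W\}$ is still a $(k-1)$-packing set. The strategy is to intersect $W$ with a carefully chosen collection of the existing subspaces so that the packing condition forces $W$ to meet them nontrivially, contradicting that $N$ has the packing property.

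First I would extract structural information about $W$ from its forced intersections with the infinity subspaces. Since $k=4$, the relevant ``small'' intersection is $W\cap W_{\infty_1}\cap W_{\infty_2}\cap W_{\infty_3}$, which must be trivial. I would compute the common intersection $W_{\infty_1}\cap W_{\infty_2}\cap W_{\infty_3}$ using the disjointness hypotheses $Y_1\cap Y_2\cap Y_3=\{0\}$ and $S_i\cap S_j=\{0\}$: on the $(k-2)$-th coordinate the $y$-part must lie in $Y_1\cap Y_2\cap Y_3=\{0\}$, and on the $(k-1)$-th coordinate the $s$-part must lie in $S_1\cap S_2\cap S_3=\{0\}$, so this triple intersection is $\{(x_1,\dots,x_{k-3},0,0,\mathbf 0)\}\cong(\F_{q^h})^{k-3}$. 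Using the packing property of $N$ together with a dimension count, I would argue that $W$ must be expressible as a graph: there is a map $f:(\F_{q^h})^{k-3}\times\F_{q^h}\times\F_q^{r_0}\to\F_{q^h}$ (encoding the $(k-2)$-th coordinate) such that every element of $W$ is determined by the remaining coordinates, analogous to the graph description $W=\{(f(x,\overline z),x,\overline z)\}$ used in the $k=3$ proposition.

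Then the heart of the argument is to produce an explicit nonzero vector lying in $W$ together with enough of the other subspaces to violate the $(k-1)$-packing bound. Following the $k=3$ template, I would fix a nonzero vector $v_0\in W$ whose ``infinity-relevant'' coordinates are supported appropriately, set the remaining free coordinates to zero, and then choose a scalar $\alpha_0\in\F_{q^h}$ so that $v_0\in W_{\alpha_0}$ as well. Concretely, once $W$ is written as a graph and one picks a suitable $x_0$ (e.g.\ in some $Y_i\cap S_j$-type support guaranteed by the dimension hypotheses $r_1\le 2h/3$, $r_2\le h/2$), the single linear condition defining $W_{\alpha_0}$ can be solved for $\alpha_0$ by the same ratio $\alpha_0=-f(\cdots)/x_0$ device. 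The result is a nonzero vector lying in the intersection of $k=4$ distinct members of $N$, contradicting the packing property.

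The main obstacle I anticipate is not the final scalar-solving step but rather rigorously pinning down the graph form of $W$ and handling the extra degrees of freedom introduced by having both a $Y$-component and an $S$-component in each infinity subspace. Unlike the clean $k=3$ situation, where $W_{\infty_1}\cap W_{\infty_2}$ is a single $\F_{q^h}$-line and $W$ collapses immediately to a graph, here I must verify that the forced triviality of $W\cap W_{\infty_1}\cap W_{\infty_2}\cap W_{\infty_3}$, combined with the pairwise intersection constraints, genuinely forces $W$ into a projection-splitting form on the relevant coordinates; the interplay of the dimension inequalities $r_1+r_2=h+r_0$ must be used to guarantee that the chosen test vector $v_0$ can be taken nonzero while still lying in one of the $W_{\infty_i}$ and in some $W_{\alpha_0}$. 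I would therefore spend most of the effort making the reduction to the graph form airtight and checking that the support conditions on $v_0$ are simultaneously satisfiable.
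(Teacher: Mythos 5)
Your proposal follows the same skeleton as the paper's proof: assume by contradiction that $N=L\cup\{W\}$ is a $3$-packing, observe $W_{\infty_1}\cap W_{\infty_2}\cap W_{\infty_3}=\{(x,0,0,\mathbf{0})\,:\,x\in\F_{q^h}\}$, deduce that $W$ is a graph, and produce a violating vector via the ratio $\alpha_0=-f(\cdots)/x_0$. The reduction to graph form, which you flag as the main obstacle, is actually immediate for $k=4$: $\dim W+\dim(W_{\infty_1}\cap W_{\infty_2}\cap W_{\infty_3})=(2h+r_0)+h=3h+r_0$ is exactly the ambient dimension, so trivial intersection forces a direct sum and $W=\{(f(x,y,\overline{z}),x,y,\overline{z})\,:\,x,y\in\F_{q^h},\overline{z}\in\F_q^{r_0}\}$; note that $f$ must then determine the \emph{first} coordinate (the direction of the triple intersection), not the $(k-2)$-th one as you wrote, though that indexing slip is harmless since the dimension count forces the correct form.

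The genuine gap is in the final step, the construction of the test vector $v_0$. To contradict the $3$-packing property you need a nonzero vector lying in \emph{four} distinct members of $N$; your closing paragraph only requires $v_0\in W$, $v_0$ in ``one of the $W_{\infty_i}$'', and $v_0\in W_{\alpha_0}$, which is three members and contradicts nothing. The fourth member must be a second infinity subspace, and here your proposed support is the wrong intersection: a vector of $W_{\infty_a}\cap W_{\infty_b}$ has its $s$-part in $S_a\cap S_b=\{0\}$ and its $y$-part in $Y_a\cap Y_b$, because the $Y$'s and the $S$'s occupy different coordinate slots, so an element of some ``$Y_i\cap S_j$'' is of no use. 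The correct device (the paper's) is to pick $x_0\in(Y_1\cap Y_2)^*$, which is possible precisely because $r_2\le h/2$ and $r_1+r_2=h+r_0$ give $r_1\ge h/2+r_0$, hence $\dim_{\F_q}(Y_1\cap Y_2)\ge 2r_1-h\ge 2r_0>0$. Then $v_0:=(f(x_0,0,\mathbf{0}),x_0,0,\mathbf{0})$ is nonzero and lies in $W\cap W_{\infty_1}\cap W_{\infty_2}\cap W_{\alpha_0}$ with $\alpha_0:=-f(x_0,0,\mathbf{0})/x_0$, giving the contradiction. Without identifying the pairwise intersection $Y_1\cap Y_2\neq\{0\}$ as the source of the test vector, your argument does not close.
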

\begin{proof}
    Consider $W$, an $\F_q$-subspace of $(\F_{q^h})^3\times(\F_{q})^{r_0}$ of $q$-dimension $2h+{r_0}$ {not in} $L$. By contradiction, let $N:=L\cup\{W\}$ be a $3$-packing set. {Therefore} $W\cap W_{\infty_1}\cap W_{\infty_2}\cap W_{\infty_3}=\{\mathbf{0}\}$ and  $W_{\infty_1}\cap W_{\infty_2}\cap W_{\infty_3}=\{(x,0,0,\mathbf{0})\, :\,x\in\F_{q^h}\}$. {This} means that $W=\{(f(x,y,\overline{z}),x,y,\overline{z}):x,y \in\F_{q^h},\overline{z}\in(\F_q)^{r_0}\}$ {for some function $f: \mathbb{F}_{q^h}\times \mathbb{F}_{q^h}\times\mathbb{F}_q^{r_0}\to \mathbb{F}_{q^h}$}. Fix $x_0\in Y_1^*\cap Y_2^*$. Consider{ing} $v_0:=(f(x_0,\mathbf{0}),x_0,0,\mathbf{0})$ and $\alpha_0:=-f(x_0,\mathbf{0})/x_0$, we have a contradiction since $v_0\in W\cap W_{\infty_1}\cap W_{\infty_2}\cap W_{\alpha_0}$.
    
\end{proof}

\subsubsection{Construction of ``long'' QMDS additive codes with $k=3$, any $q\geq2,h\geq6,h\neq7$ and $n=q^h+g(q)$}
In the case of $k=3$ we can add more subspaces of the type $W_{\infty_i}$. 
Consider the largest set $\Omega_1=\{
Y_i\}_{i\in I_1}$ consisting of $\F_q$-subspaces of $\F_{q^h}$ of $q$-dimension $r_1$ such that $Y_{i_1}\cap Y_{i_2} \cap Y_{i_3}=\{\mathbf{0}\}$ for any {pairwise distinct} ${i_1},{i_2},{i_3}\in I_1$. Also, consider the largest set $\Omega_2=\{S_i\}_{i\in I_2}$ consisting of $\F_q$-subspaces of $\F_{q^h}$ of $q$-dimension $r_2$ such that $S_i\cap S_j=\{\mathbf{0}\}$ for any $i\neq j\in I_2$.

{Clearly,} $\Omega_2$ is partial $r_2$-spread of $\F_{q^h}$. Let us consider a maximal partial $(h-r_1)$-spread of $\F_{q^h}$, $\Gamma=\{U_j\}_{j\in J}$. Recall that $r_1\leq2h/3,r_2\leq h/2,r_1+r_2=h+r_0$. Since $r_2\geq h-r_1$, we have $\#\Omega_2\leq\Gamma$. Now consider $\Gamma^\perp:=\{U_j^\perp\}_{j\in J}$: it is a set of $r_1$-subspaces such that the intersection of three of them is the trivial subspace. So we have that $\#\Omega_1\geq\#\Gamma^\perp=\#\Gamma\geq\#\Omega_2$. 

{Let}  $g(q):=\#\Omega_2$ {and consider}
$$W_{\infty_i}:=\{(y,s,0,\dots,0)\,:\,y\in Y_i,s \in 
S_i\}\subseteq\F_{q^h}^2\times(\F_q)^{r_0},$$
and  $\overline{L}:=\{W_\alpha\,:\,\alpha\in\F_{q^h}\}\cup\{W_{\infty_i}\,:\,i=1,\dots,g(q)\}$.

\begin{theorem}
    {The set $\overline{L}$ defined above} is a $2$-packing set.
\end{theorem}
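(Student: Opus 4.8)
The plan is to show that no nonzero vector of $\F_{q^h}^2\times(\F_q)^{r_0}$ lies in three distinct members of $\overline{L}$; since here $t=1$ and $\lambda=k-1=2$, this is exactly the assertion that $\overline{L}$ is a $2$-packing set. As in the proofs of Theorem \ref{Thm_n=q^h+2} and Theorem \ref{Thm_n=q^h+3}, I would argue by a case analysis according to how many of the three chosen blocks are of type $W_\alpha$ and how many are of type $W_{\infty_i}$. Writing a generic vector as $v=(x_1,x_2,z_1,\dots,z_{r_0})$ and setting $x_3:=\sum_{j=1}^{r_0}\xi^{j-1}z_j$ (so the $W_\alpha$-constraint reads $x_1+\alpha x_2+\alpha^2 x_3=0$), I record once and for all that $x_3=0$ forces $z_1=\cdots=z_{r_0}=0$, because $1,\xi,\dots,\xi^{r_0-1}$ are $\F_q$-linearly independent as part of the basis $\mathcal{B}$.

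The first three cases reduce directly to computations already carried out for $k=3$ in the earlier theorems. If all three blocks are $W_{\alpha_1},W_{\alpha_2},W_{\alpha_3}$ with distinct $\alpha_i$, then $v$ solves the linear system with matrix $VM(\alpha_1,\alpha_2,\alpha_3)$ in the unknowns $x_1,x_2,x_3$, whose invertibility yields $v=\mathbf{0}$. If exactly one block is $W_{\infty_i}$, membership in $W_{\infty_i}$ already gives $x_3=0$, after which the two $W_\alpha$-equations become $x_1+\alpha x_2=0$ for two distinct $\alpha$, forcing $x_1=x_2=0$. If exactly two blocks are $W_{\infty_i},W_{\infty_j}$ with $i\neq j$, then $x_3=0$ and $x_2\in S_i\cap S_j=\{0\}$ since $\Omega_2$ is a partial spread, and the remaining $W_\alpha$-equation then gives $x_1=0$.

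The only genuinely new case, and the one where the defining properties of $\Omega_1$ and $\Omega_2$ are essential, is when all three blocks are of infinite type, say $W_{\infty_i},W_{\infty_j},W_{\infty_\ell}$ with $i,j,\ell$ pairwise distinct. Here membership forces $x_3=0$, while $x_2\in S_i\cap S_j=\{0\}$ gives $x_2=0$ and $x_1\in Y_i\cap Y_j\cap Y_\ell=\{0\}$ gives $x_1=0$, so again $v=\mathbf{0}$. The delicate point is that this argument must be available for \emph{every} triple of indices in $\{1,\dots,g(q)\}$: this holds precisely because the $S_i$ are drawn from the partial spread $\Omega_2$ (pairwise trivial intersection) and the $Y_i$ from $\Omega_1$ (every three distinct members meeting trivially), and because the spread-duality count $\#\Omega_1\geq\#\Gamma^\perp=\#\Gamma\geq\#\Omega_2=g(q)$ guarantees enough subspaces $Y_i$ to pair with the $S_i$. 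Thus I expect the main obstacle to be not any single computation but the bookkeeping that the combinatorial pairing of the $Y_i$ with the $S_i$ is legitimate for all $g(q)$ infinite-type blocks; once this is in place, the four cases close uniformly and $\overline{L}$ is a $2$-packing set.
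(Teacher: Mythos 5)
Your proof is correct and follows essentially the same route as the paper's: a case analysis on how many of the three chosen blocks are of type $W_\alpha$, with the Vandermonde argument disposing of every case involving at least one $W_\alpha$ and the hypotheses on the $Y_i$ (any three meet trivially) and the $S_i$ (any two meet trivially) disposing of the all-infinite case. The paper merely compresses the cases containing a $W_\alpha$ by citing the proofs of Theorem \ref{Thm_n=q^h+2} and Theorem \ref{Thm_n=q^h+3}, and the pairing count $\#\Omega_1\geq\#\Omega_2=g(q)$ that you flag as the delicate point is exactly the content of the paper's setup paragraph preceding the theorem, so no new verification is needed there.
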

\begin{proof}
    We have to show that any three subspaces have trivial intersection.
    If at least one of these three subspaces are of the type $W_\alpha$ then the proof follows the one of Theorem \ref{Thm_n=q^h+3}. {The} case $W_{\infty_{i_1}}\cap W_{\infty_{i_2}}\cap W_{\infty_{i_3}}$ {follows from} the assumptions on $Y_{i_1},Y_{i_2},Y_{i_3}$ and $S_{i_1},S_{i_2}$.
\end{proof}

As before, by Theorem \ref{beut}, given $a,b$ non negative integers such
that $h=ar_2+b$, we obtain $$g(q)\geq \sum_{i=1}^{a-1}q^{ir_2+b}+1.$$ 

\begin{remark}
Since $L$ is a $2$-packing set,  it is a $1$-$(r, \geq r-h, 2)_q$ packing of size $q^h+g(q)$. This is equivalent to have an $[n,2+r_0/h,d]_q^h$ additive code, where $n=q^h+g(q)$ and $n-d=2$. This means {that} it is a QMDS additive code that we will indicate with $\overline{\mathcal{B}}_{q,h,r_0}$, or simply $\overline{\mathcal{B}}_{q,h}$ if $r_0$ is not important in the context.
\end{remark}

\subsection{Construction of ``long'' QMDS additive codes with $k=2$,  $q\geq2,h\geq2$, and $n=q^h+f(q)$}
In the case of $k=2$, we are looking for some $1$-$(r, \geq r-h, 1)_q$ packings. We can notice that a $1$-packing it is actually a partial spread of $\F_q^r$. So given $\Omega$, a $r_0$-dimensional partial spread of $\F_q^r$, where $r=h+r_0$, of cardinalty $n$, we can construct an $[n,1+r_0/h,d]_q^h$ QMDS additive code, that we will call $C_\Omega$. Consider the $r_0$-dimensional partial spread of maximum cardinality $\tilde{\Omega}$, this gives us a $[\#\tilde{\Omega},1+r_0/h,d]_q^h$ QMDS additive code. Let $\#\tilde{\Omega}=q^h+f(q)$, we can obtain a lower bound for $f(q)$. Let $a,b$ be non negative integers such that $h=ar_0+b$ with $b\leq r_0-1$. From Theorem \ref{beut} we obtain  $$f(q)\geq \sum_{i=1}^{a-1}q^{ir_0+b}+1.$$ 

Obviously, in the case $r_0\mid h$, the Desarguesian spread is the optimal choice for $\tilde{\Omega}$, {and} in this case we have
$$f(q)=\sum_{i=1}^{a-1}q^{ir_0}+1=\frac{q^h-1}{q^{r_0}-1},$$
and we obtain a code we the same parameters as the one constructed in \cite[Theorem 23]{BLP}.

\section{Dual codes}

The aim of this section is to provide examples of QMDS codes that are not dually QMDS.

\begin{definition}\cite[Definition 20]{BLP} 
    Let $J$ be a subset of $\{1,\dots,n\}$. The geometric quotient at $J$ of an additive $[n,r/h,d]_q^h$ code C is a code $C/J$ of length $n-| J|$ defined as
    $$C/J=\{c_J\,:\,c\in C, c_j=0\,\text{ for all }\,j\in J\},$$
    where $c_J$ denotes the vector obtained from $c$ by deleting the coordinate positions belonging
to the subset $J$. We say a geometric quotient is {non-obliterating} if $|C/J|\geq q^{h}$.
\end{definition}
\begin{remark}
    Note that an obliterating quotient always yields an unfaithful additive code, since if we project the code in one of its components we will obtain $\dim U_j<h$.
\end{remark}

Our goal is to see what happens to the associated dual projective system $T_G(C)$ when we apply this operation to $C$. We can consider the geometric quotient with $|J|=1$, and so w.l.o.g. $J=\{1\}$, and define the operation inductively.  

\begin{prop}\label{propTcGIsom}
    Let $C$ be an $[n,r/h,d]_q^h$ additive code. Then $C/\{1\}$ is an $[n-1,\ell/h,d']_q^h$ additive code, where $\ell=\dim(W_1(C))$, and $W_j(C/\{1\})\simeq W_1(C)\cap(W_{j+1}{(C)})$ for any $j=1,\dots,n-1$.
\end{prop}
\begin{proof}
Consider the matrix $G_{\{1\}}$, obtained by deleting the first column from $G$.
From Proposition $\ref{Prop c_i=0 then u in V_i^perp}$ we have that 
$$C/\{1\}=\{c_{\{1\}}\,:\,c\in C\,,\, c_1=0\}=\{uG_{\{1\}}:u\in U_1(C)^\perp=W_1(C)\},$$
so since $G$ is full rank, $\dim(C/\{1\})=\dim (W_1(C))$.
Let $\{u_1,\dots,u_{\ell}\}$ be a basis of $W_1(C)$. Consider the matrix $M$ that has as rows the vectors $\{u_i\}_{i=1,\dots,\ell}$, so $\Phi(v):=vM$ will be an isomorphism between $\F_q^{\ell}$ and $W_1(C)$. 

Let $N$ be the matrix product of $M$ and $G_{\{1\}}$.
We have
$$C/\{1\}=\{vN:v\in \F_q^\ell\}.$$
 {Thus} $N$ is a generator matrix for $C/\{1\}$. Given this, we know that $U_j(C/{\{1\}})=\{Mv\,:\,v \in U_{j+1}(C)\}$ for any $j=1,\dots,n-1$ {and} we will prove that $\Phi(W_{j}(C/{\{1\}}))=W_1(C)\cap W_{j+1}(C)$. Let $v_0\in\Phi(W_{j}(C/{\{1\}}))$. By the definition of $\Phi$, we have $v_0\in W_1(C)$ and  there exists $w\in W_{j}(C/\{1\})$ such that $v_0=wM$. We know that $\langle w,u\rangle =0$ for any $u\in U_{j}(C/\{1\})$. So $w(Mv)=(wM)v=0$ for any $v \in U_{j+1}(C)$ and so $v_0=wM\in W_{j+1}(C)$. We can prove the converse in a similar way.
 \end{proof}

\begin{remark}\label{Remark dual}
     {Note that} $C/\{1\}$ is faithful if and only if  $\dim(W_1(C)\cap W_j(C))=\dim(W_1(C))-h$ for any ${j=2,\dots,n}$. In general, given $J=\{j_1,\dots,j_l\}$, $C/{J}$ is faithful if and only if $$\dim(W_{j_1}(C)\cap\dots\cap W_{j_l}(C)\cap W_j(C))=\\dim(W_{j_1}(C)\cap\dots\cap W_{j_l}(C))-h$$ for any $j\in[n]\setminus J$.
\end{remark}
From now on we will denote $C/{\{j\}}$ also by $C/W_j$ and  we will use these results about faithful codes.
\begin{theorem}  \label{thm BLP c faith}
Let $C$ be an additive $[n,r/h,d]_{q}^h$ code, and let $d^\perp$ denote the minimal distance of $C^\perp$.
\begin{enumerate}
    \item   $C$ is unfaithful if and only if $d^\perp=1$; \cite[Theorem 17]{BLP}.
    \item   $C$ is faithful with $d \geq 2$ if and only if $C^{\perp}$ is faithful with $d^{\perp} \geq 2$; \cite[Theorem 18]{BLP}.
\end{enumerate}
\end{theorem}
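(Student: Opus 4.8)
The plan is to reduce both statements to a single computation determining when $C^\perp$ has a codeword of weight $1$. The key is to read off faithfulness of $C$ from the coordinate projections of $C$. Fix a position $j$ and let $\pi_j(C):=\{c_j:c\in C\}\subseteq\F_{q^h}$ denote the image of $C$ under projection onto the $j$-th coordinate. First I would show that $\pi_j(C)$ is an $\F_q$-subspace of $\F_{q^h}$ with $\dim_{\F_q}\pi_j(C)=\dim_{\F_q}U_j$. Writing a codeword as $c=uG$ with $u\in\F_q^r$ and expanding the entries $a_{i,j}$ over the basis $\mathcal{B}$, one gets $c_j=\sum_{e=1}^h\bigl(\sum_{i=1}^r u_i b_{i,j}^e\bigr)\xi^{e-1}$. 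Since $u\mapsto uG$ is an isomorphism $\F_q^r\to C$ and $\{\xi^{e-1}\}$ is a basis of $\F_{q^h}$ over $\F_q$, the rank of the map $u\mapsto c_j$ equals the rank of the $j$-th block of $\tilde G$, i.e.\ $\dim U_j$. Hence $C$ is faithful precisely when $\pi_j(C)=\F_{q^h}$ for every $j$, equivalently $\dim U_j=h$, equivalently $\dim W_j=r-h$, for every $j$.

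Next I would characterize the weight-one codewords of the dual. A vector $v$ supported only at position $j$, with $v_j\neq0$, lies in $C^\perp$ if and only if $\mathrm{Tr}_{q^h/q}(c_j v_j)=0$ for all $c\in C$; that is, $v_j$ lies in the trace-orthogonal complement of $\pi_j(C)$ inside $\F_{q^h}$. Because the form $(x,y)\mapsto\mathrm{Tr}_{q^h/q}(xy)$ is a nondegenerate $\F_q$-bilinear form on $\F_{q^h}$, this complement has $\F_q$-dimension $h-\dim_{\F_q}\pi_j(C)$, so a nonzero such $v_j$ exists if and only if $\dim_{\F_q}\pi_j(C)<h$, i.e.\ $\dim U_j<h$. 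Letting $j$ range over all positions yields $d^\perp=1$ if and only if $\dim U_j<h$ for some $j$, which is exactly the definition of $C$ being unfaithful. This proves the first statement.

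For the second statement I would apply the first to both $C$ and $C^\perp$, using $(C^\perp)^\perp=C$. Since $d^\perp\geq1$ always holds, the first part says $C$ is faithful if and only if $d^\perp\geq2$; applying it to $C^\perp$ gives that $C^\perp$ is faithful if and only if $d\geq2$. Thus the assertion ``$C$ is faithful with $d\geq2$'' unwinds to ``$d^\perp\geq2$ and $d\geq2$'', while ``$C^\perp$ is faithful with $d^\perp\geq2$'' unwinds to ``$d\geq2$ and $d^\perp\geq2$''. These are the same condition, so the equivalence is immediate.

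The only genuinely nontrivial step is the dimension identification in the first paragraph, and the main obstacle there is bookkeeping: one must pass carefully between the $\F_{q^h}$-description of $C$ given by $G$ and the $\F_q$-description given by $\tilde G$, verifying that the $\F_q$-dimension of the coordinate projection $\pi_j(C)$ equals $\dim U_j$, and then apply nondegeneracy of the trace form to precisely this subspace. Once this dictionary is in place, both parts follow formally.
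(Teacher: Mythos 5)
Your proof is correct, but note that the paper itself contains no proof of this statement to compare against: it is quoted verbatim from \cite{BLP} (Theorems 17 and 18), so your argument is a self-contained reconstruction of an imported result. The core computation is right: identifying $\dim_{\mathbb{F}_q}\pi_j(C)$ with the rank of the $j$-th block of $\tilde{G}$, hence with $\dim U_j = r - \dim W_j$, and then observing that a weight-one dual word supported at $j$ exists exactly when the trace-orthogonal complement of $\pi_j(C)$ in $\mathbb{F}_{q^h}$ is nonzero, which by nondegeneracy of the trace form happens precisely when $\dim U_j < h$, i.e.\ when $C$ fails faithfulness at position $j$. A pleasant feature of your route is that part (2) then becomes a purely formal consequence of part (1) applied to both $C$ and $C^\perp$, with the two sides of the equivalence unwinding to the identical condition ``$d \geq 2$ and $d^\perp \geq 2$''. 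Two small points you should make explicit: first, the biduality $(C^\perp)^\perp = C$ that part (2) leans on is not free --- it follows from nondegeneracy of the $\mathbb{F}_q$-bilinear form $(u,v) \mapsto \mathrm{Tr}_{q^h/q}(\langle u,v\rangle)$ on $\mathbb{F}_{q^h}^n$ together with the dimension count $\dim_{\mathbb{F}_q} C^\perp = nh - r$ (which the paper implicitly assumes when it calls $C^\perp$ an $[n, n-r/h, d^\perp]^h_q$ code); second, the statement tacitly requires $0 < r < nh$ so that both $d$ and $d^\perp$ are defined, a degenerate case worth a sentence. Neither is a genuine gap.
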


\begin{theorem}\cite[Theorem 22]{BLP} \label{Dual faith mds}
    Let $C$ be an additive QMDS code.
The dual code $C^{\perp}$ is an additive faithful QMDS code if and only if every non-obliterating geometric quotient of $C$ is faithful.
\end{theorem}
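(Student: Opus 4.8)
The plan is to recast both directions in terms of the subspaces $U_j=W_j^{\perp}\subseteq\F_q^r$ and to reduce the QMDS property of $C^{\perp}$ to a statement about the sizes of the \emph{inclusion-minimal} unfaithful quotients of $C$. First I would dispose of the degenerate case: if $C$ is unfaithful then Theorem~\ref{thm BLP c faith}(1) gives $d^{\perp}=1$, so $C^{\perp}$ is not QMDS (its QMDS distance would be $n-k^{\perp}+1\ge 2$) and the left-hand side fails, while $C=C/\emptyset$ is itself a non-obliterating (here $\dim C=r\ge h$) unfaithful quotient, so the right-hand side fails too. Hence I may assume $C$ faithful, i.e.\ $\dim_{\F_q}U_j=h$ for all $j$. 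Applying the right-hand hypothesis to $J=\emptyset$ keeps $C=C/\emptyset$ faithful, and then Theorem~\ref{thm BLP c faith}(2) together with $d=n-k+1\ge 2$ makes $C^{\perp}$ faithful automatically; so the only content left is to match ``$C^{\perp}$ is QMDS'' with ``every non-obliterating quotient of $C$ is faithful''.

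The two preparatory steps I would carry out are the following. A short computation with $W_i=U_i^{\perp}$ turns the criterion of Remark~\ref{Remark dual} into the clean statement that $C/J$ is faithful if and only if for every $j\notin J$ one has $\dim U_j=h$ and $U_j\cap\sum_{i\in J}U_i=\{0\}$. Next I would establish the bridge to the dual distance,
\[
d^{\perp}=1+\min\{\,|J|\;:\;C/J\text{ is unfaithful}\,\}.
\]
This comes from Theorem~\ref{thm BLP c faith}(1) applied to each quotient $C/J$ (whose dual is the puncturing of $C^{\perp}$ at $J$); equivalently, in the faithful case $C^{\perp}$ is isomorphic to the space of syzygies $\{(p_1,\dots,p_n):p_i\in U_i,\ \sum_i p_i=0\}$ graded by the folded weight $\#\{i:p_i\neq 0\}$, and a minimal nonzero syzygy supported on $S$ makes $C/(S\setminus\{i_0\})$ unfaithful and conversely.

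The technical heart is a dimension count for an inclusion-minimal unfaithful $J$. Since every proper subset of such a $J$ is faithful, the criterion above shows that building $J$ one coordinate at a time forces each step to be an ``$h$-drop'', so that
\[
\dim(C/J)=\dim W_J=r-|J|\,h .
\]
Consequently a minimal unfaithful $J$ is non-obliterating exactly when $r-|J|h\ge h$, i.e.\ $|J|\le k-2$ in the fractional case and $|J|\le k-1$ in the integral case. Because ``obliterating'' is closed upward under inclusion and every unfaithful set contains a minimal one, the existence of a non-obliterating unfaithful quotient is equivalent to the existence of a minimal unfaithful $J$ below that threshold, hence to $\min\{|J|:C/J\text{ unfaithful}\}\le k-2$ (resp.\ $\le k-1$). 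Negating and using the bridge formula gives $d^{\perp}\ge k$ (resp.\ $\ge k+1$); the generalized Singleton bound for $C^{\perp}$ gives $d^{\perp}\le n-k^{\perp}+1=k$ (resp.\ $=k+1$), so equality holds precisely when every non-obliterating quotient is faithful, which is exactly the QMDS condition on $C^{\perp}$.

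I expect the main obstacle to be this last alignment: verifying that the threshold $r-|J|h\ge h$ on minimal unfaithful sets reproduces the generalized Singleton value $n-k^{\perp}+1$ in \emph{both} the integral ($r_0=h$) and fractional ($r_0<h$) regimes, and proving the bridge formula $d^{\perp}=1+\min\{|J|:C/J\text{ unfaithful}\}$ rigorously (the identification of $C^{\perp}$ with the graded syzygy module and of duals of quotients with punctured codes is where care is needed). The remaining manipulations are formal consequences of Remark~\ref{Remark dual} and Theorem~\ref{thm BLP c faith}.
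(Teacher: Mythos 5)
First, a point of order: the paper never proves this statement at all --- it is imported verbatim as \cite[Theorem 22]{BLP}, so there is no internal proof to compare yours against; any comparison would have to be with the proof in \cite{BLP} itself. Judged on its own merits, your argument is essentially correct and would give a self-contained proof from the ingredients this paper does quote. The three pillars all check out: (i) dualizing Remark~\ref{Remark dual} via $\bigcap_{i\in J}W_i=\bigl(\sum_{i\in J}U_i\bigr)^{\perp}$ does yield that $C/J$ is faithful if and only if $\dim U_j=h$ and $U_j\cap\sum_{i\in J}U_i=\{0\}$ for every $j\notin J$; (ii) the bridge $d^{\perp}=1+\min\{|J|:C/J\ \text{unfaithful}\}$ is valid for faithful $C$ --- your first route (Theorem~\ref{thm BLP c faith}(1) applied to $C/J$, whose dual is the puncturing of $C^{\perp}$ at $J$, a standard shortening/puncturing duality that does hold for the trace inner product) is the cleaner one, and the weight-preserving identification of $C^{\perp}$ with the syzygy space $\{(p_1,\dots,p_n):p_i\in U_i,\ \sum_i p_i=0\}$ is also correct when every $\dim U_i=h$; (iii) the ``$h$-drop'' count $\dim(C/J)=r-|J|h$ for an inclusion-minimal unfaithful $J$, and its alignment with $n-k^{\perp}+1=\lfloor r/h\rfloor+1$ in both the integral and fractional regimes, are exactly right.

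Two caveats, both at the degenerate fringe, and both really defects of the statement as quoted rather than of your core argument --- but your proof papers over them with parenthetical claims that are not true in general. You assert that $\dim C=r\ge h$ when disposing of the unfaithful case; this is not automatic. If $r<h$ then every code is unfaithful by definition, all quotients are obliterating, and the equivalence genuinely fails: for $C=\F_q\subset\F_{q^h}$ with $n=1$, $h\ge 2$, the right-hand side holds vacuously while $C^{\perp}=\ker(\mathrm{Tr}_{q^h/q})$ is QMDS but unfaithful. Similarly, your appeal to Theorem~\ref{thm BLP c faith}(2) needs $d=n-k+1\ge 2$, i.e.\ $n>k$; for $n=k$ one can again produce faithful QMDS codes (e.g.\ $h=2$, $r=3$, $n=2$, with $U_1,U_2$ two distinct planes of $\F_q^3$) whose dual is QMDS but unfaithful, while every non-obliterating quotient of $C$ is faithful. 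So your proof is correct precisely on the non-degenerate range $r\ge h$ and $n>k$, where the theorem is surely intended to live; you should state these as hypotheses (or flag them as implicit in \cite{BLP}) rather than treat them as automatic.
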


{We are in position now to prove the main results of this section.}

The following theorem {shows the existence of} examples of QMDS codes that are not dually QMDS {and} this can happen only for fractional additive QMDS codes {\cite[Theorem 3.3]{YADAV}}.
% \begin{theorem}\cite[Theorem 3.3]{YADAV} 
%     The dual code of an additive \dan{integral?} QMDS code is QMDS.
% \end{theorem}
\begin{theorem}\label{DualNONmds}
    {The codes $\mathcal{A}_{q,h,k}^\perp$, $\mathcal{B}_{q,h,k}^\perp$, and $\overline{\mathcal{B}}_{q,h}^\perp$ are not QMDS.}
\end{theorem}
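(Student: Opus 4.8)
The strategy is to show that each of the three dual codes fails to be QMDS by exhibiting a non-obliterating geometric quotient of the primal code that is unfaithful, and then invoking Theorem~\ref{Dual faith mds}. The key observation is that for a QMDS code $C$, being dually QMDS is (essentially) controlled by whether all non-obliterating quotients are faithful: if some such quotient is unfaithful, then $C^\perp$ cannot be a faithful QMDS code, and since all three codes here are fractional with $d^\perp \geq 2$ (so $C^\perp$ is itself faithful by Theorem~\ref{thm BLP c faith}(2), provided $C$ is faithful with $d\geq 2$), the failure of faithfulness of a quotient translates directly into $C^\perp$ not being QMDS. So the heart of the argument is purely geometric: find subspaces $W_{i}$ in the packing $T(C)$ whose pairwise or triple intersections have the ``wrong'' dimension.

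\textbf{Key steps.} First I would fix attention on the special infinite-point subspaces $W_{\infty_i}$, since these are the ones that break the symmetry of the Vandermonde part and carry the partial-spread/dual-arc structure. For $\mathcal{A}_{q,h,k}$, I would compute $\dim(W_{\infty_1}\cap W_{\infty_2})$: by construction $S_1\cap S_2=\{0\}$, so the $x_{k-1}$-coordinate is forced to lie in $S_1\cap S_2=\{0\}$, giving $\dim(W_{\infty_1}\cap W_{\infty_2}) = (k-2)h$, whereas faithfulness of the quotient $C/W_{\infty_1}$ at the position $\infty_2$ would require this intersection to have dimension $\dim(W_{\infty_1})-h = (k-2)h + r_0 - h$. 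Since $r_0 \leq h/2 < h$, these two dimensions disagree, so the quotient $C/W_{\infty_1}$ is unfaithful at $\infty_2$ (one checks the quotient is non-obliterating because $\dim(W_{\infty_1}) = (k-2)h+r_0 \geq h$ whenever $k\geq 2$ and $r_0\geq 0$, ensuring $|C/\{\infty_1\}| = q^{\dim W_{\infty_1}} \geq q^h$). The same computation works for $\mathcal{B}_{q,h,k}$ and $\overline{\mathcal{B}}_{q,h}$: there the relevant intersections involve the $Y_i$ and $S_i$ blocks, and the defining conditions ($Y_1\cap Y_2\cap Y_3=\{0\}$, $S_i\cap S_j=\{0\}$) again force a drop in dimension below the faithful threshold $\dim(W_{\infty_1})-h$.

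\textbf{Main obstacle.} The delicate point is bookkeeping the dimensions precisely and verifying the non-obliterating condition in each case, especially for $\overline{\mathcal{B}}_{q,h}$ where $k=2$ and the subspaces $W_{\infty_i}$ have dimension $h + r_0 - \text{(something)}$; here one must be careful that $\dim(W_{\infty_1}) \geq h$ still holds so that the quotient is non-obliterating, and that the intersection dimension genuinely falls short of $\dim(W_{\infty_1})-h$. I would also double-check that $C$ itself is faithful with $d\geq 2$ so that Theorem~\ref{thm BLP c faith}(2) applies and $C^\perp$ is faithful with $d^\perp\geq 2$; this ensures the only way $C^\perp$ can fail QMDS is via Theorem~\ref{Dual faith mds}. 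The cleanest exposition treats the three codes uniformly by isolating the single combinatorial fact that some $W_{\infty_i}\cap W_{\infty_j}$ has dimension strictly larger than $\dim(W_{\infty_i}) - h$, which is exactly the negation of the faithfulness criterion in Remark~\ref{Remark dual}.
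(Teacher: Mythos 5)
Your overall framework is exactly the paper's: establish that $C^\perp$ is faithful via Theorem~\ref{thm BLP c faith}, exhibit a non-obliterating geometric quotient of the primal code that is unfaithful, and conclude from Theorem~\ref{Dual faith mds} that $C^\perp$, being faithful but not faithful-QMDS, cannot be QMDS. Your computation for $\mathcal{A}_{q,h,k}$ is also the paper's: $\dim(W_{\infty_1}\cap W_{\infty_2})=(k-2)h$ while faithfulness of $C/W_{\infty_1}$ would force $(k-2)h+r_0-h=(k-3)h+r_0$, and these differ since $r_0<h$. (One slip there: your claim that $\dim(W_{\infty_1})=(k-2)h+r_0\geq h$ ``whenever $k\geq 2$'' is false for $k=2$, where $\dim(W_{\infty_1})=r_0\leq h/2<h$ and the quotient is obliterating; the argument genuinely needs $k\geq 3$, an assumption the paper's proof also makes tacitly by writing $(k-3)h+r_0$.)

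The genuine gap is in your treatment of $\mathcal{B}_{q,h,k}$ and $\overline{\mathcal{B}}_{q,h}$, where you assert ``the same computation works'' because the conditions $Y_1\cap Y_2\cap Y_3=\{0\}$ and $S_i\cap S_j=\{0\}$ ``force a drop in dimension below the faithful threshold $\dim(W_{\infty_1})-h$.'' This is wrong on two counts. First, the direction is impossible: since every block of these packings has dimension $r-h$, one always has $\dim(W_{\infty_1}\cap W_j)\geq \dim(W_{\infty_1})+\dim(W_j)-r=\dim(W_{\infty_1})-h$, so unfaithfulness can only come from intersections that are \emph{too large} (your closing sentence states the correct criterion, contradicting your earlier one, but never verifies it). Second, and more importantly, for $\mathcal{B}$ the trivial-intersection hypotheses are not what produces the violation --- they push intersections \emph{down toward} the threshold. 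The indispensable step, which your sketch omits entirely, is a forced large intersection in the $Y$-block: from $r_1+r_2=h+r_0$ and $r_2\leq h/2$ one gets $r_1\geq h/2+r_0$, hence $\dim(Y_1\cap Y_2)\geq 2r_1-h\geq 2r_0>r_0$, so that $\dim(W_{\infty_1}\cap W_{\infty_2})\geq (k-3)h+2r_0>(k-3)h+r_0=\dim(W_{\infty_1})-h$. Without this dimension count there is no violating position at all, because the intersections $W_{\infty_1}\cap W_\alpha$ sit exactly at the threshold $(k-3)h+r_0$; a reader following your stated mechanism would either be stuck or wrongly conclude the quotient is faithful. Finally, you misidentify $\overline{\mathcal{B}}_{q,h}$: it has $k=3$ (it is the extended $n=q^h+g(q)$ construction), not $k=2$, and its blocks $W_{\infty_i}$ have dimension exactly $r_1+r_2=h+r_0$; the paper handles it via the quotient at $W_{\alpha=0}$, using $\dim(W_{\alpha=0}\cap W_{\infty_1})=r_2\geq h/3+r_0>r_0=\dim(W_{\alpha=0})-h$ --- again an excess over the threshold coming from a dimension constraint, not from the trivial-intersection conditions you invoke.
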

\begin{proof}
From Theorem \ref{thm BLP c faith} we have that $\mathcal{A}_{q,h,k}^\perp,\mathcal{B}_{q,h,k}^\perp$ are faithful.
    We will prove now that for both these codes $C/W_{\infty_1}$ is unfaithful.  
    
    For $C=\mathcal{A}_{q,h,k}$ we have $\dim_{\F_q}(W_{\infty_1}\cap W_{\infty_2})=(k-2)h$ and $\dim_{\F_q}(W_{\infty_1}\cap W_{\alpha=0})=(k-3)h+ r_0$, so from the osservation made before we have that $C/W_{\infty_1}$ is unfaithful.

    Now let $C=\mathcal{B}_{q,h,k}$. Note that from the condition on $r_0,r_1,r_2$ we obtain $r_1=h+r_0-r_2\geq h/2+r_0$, so $\dim_{\F_q}(W_{\infty_1}\cap W_{\infty_2})\geq(k-3)h+2r_0$. {Also,} we have  that $\dim_{\F_q}(W_{\infty_1}\cap W_{\alpha=0})=(k-4)h+r_1+r_2=(k-3)h+r_0$, and the claim follows as in the previous case. 
    
    Finally, let $C=\overline{\mathcal{B}}_{q,h}^\perp$. Now, $C/W_{\alpha=0}$ is unfaithful since $\dim(W_{\alpha=0}\cap W_{\alpha=1})=r_0$ and $\dim(W_{\alpha=0}\cap W_{\infty_1})=r_2\geq h/3+r_0>r_0$.
\end{proof}

% \begin{theorem}
%     {The code} $\overline{\mathcal{B}}_{q,h,k}^\perp$ is not QMDS.
% \end{theorem}
% \begin{proof}

% \end{proof}

Thanks to the following theorem, we can exhibit a family of dually QMDS codes.
\begin{theorem}
    Let $C$ be an $[n,r/h,d]$ faithful additive QMDS code with $r=h+r_0$, $1\leq r_0<h$. Then $C^\perp$ is an $[n,n-r/h,d^\perp]$ additive QMDS code.
\end{theorem}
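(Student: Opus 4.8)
The plan is to pin down $d^\perp$ exactly by showing it is both at least $2$ and at most $2$, after which the QMDS condition for $C^\perp$ becomes an identity.

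First I would record all the relevant parameters. Since $r = h + r_0$ with $1 \le r_0 < h$, we have $r/h \in [1 + 1/h, 2)$, so $k = \lceil r/h \rceil = 2$; the QMDS hypothesis on $C$ then forces $d = n - k + 1 = n - 1$. Turning to the dual, $C^\perp$ has $\F_q$-dimension $nh - r$ and hence normalized dimension $n - r/h$. Because $n - r/h = n - 1 - r_0/h$ lies in the open interval $(n-2, n-1)$, its ceiling is $k^\perp = \lceil n - r/h \rceil = n - 1$. Consequently, proving that $C^\perp$ is QMDS amounts \emph{exactly} to establishing $d^\perp = n - k^\perp + 1 = 2$.

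For the lower bound I would invoke Theorem \ref{thm BLP c faith}(2): as $C$ is faithful with $d = n - 1 \ge 2$ (which holds whenever $n \ge 3$, automatic in the long regime $n > q^h + 1$), that theorem yields that $C^\perp$ is faithful with $d^\perp \ge 2$. For the upper bound I would apply the generalized Singleton bound of Huffman to $C^\perp$, namely $k^\perp \le n - d^\perp + 1$; substituting $k^\perp = n - 1$ gives $n - 1 \le n - d^\perp + 1$, that is, $d^\perp \le 2$. Combining the two bounds yields $d^\perp = 2 = n - k^\perp + 1$, so $C^\perp$ attains the bound and is QMDS.

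I do not anticipate a genuine obstacle: the argument is a sandwich between a duality-faithfulness input and a Singleton bound. The only points requiring care are the two ceiling computations, where the strict inequality $r_0 < h$ is precisely what keeps $k = 2$ and $k^\perp = n-1$ (rather than jumping by one), and the verification that the hypothesis $d \ge 2$ of Theorem \ref{thm BLP c faith} holds, which is why the degenerate cases $n \le 2$ should be excluded or dealt with directly.
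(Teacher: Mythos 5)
Your proof is correct (for $n\geq 3$) but follows a genuinely different route from the paper's. The paper's own proof is two lines long and uses the geometric-quotient machinery: since $C$ is faithful, $\dim(W_j(C))=r-h=r_0<h$ for every $j$, so every geometric quotient of $C$ at a nonempty $J$ has $\F_q$-dimension at most $r_0<h$ and is therefore obliterating; the characterization in Theorem~\ref{Dual faith mds} (that $C^\perp$ is faithful QMDS if and only if every non-obliterating quotient of $C$ is faithful) is then vacuously satisfied, and the claim follows. Your sandwich argument replaces that machinery with more elementary ingredients: the ceiling computations $k=2$ and $k^\perp=\lceil n-r/h\rceil=n-1$ (both of which hinge correctly on $1\le r_0<h$), the Huffman--Singleton bound applied to $C^\perp$ for the upper bound $d^\perp\le 2$, and Theorem~\ref{thm BLP c faith}(2) for the lower bound $d^\perp\ge 2$. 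What this buys is explicitness and self-containedness --- you pin down $d^\perp=2$ directly, a fact the paper only records afterwards in its corollary, and you avoid invoking the heavier quotient characterization --- at the cost of the side hypothesis $d\ge 2$. The degenerate case you flag ($n=2$, where $d=n-1=1$) is genuinely admitted by the theorem's hypotheses, so as written your argument has a small hole there; but it is absorbed by a one-line substitution: use part (1) of Theorem~\ref{thm BLP c faith} instead, which says $C$ is faithful if and only if $d^\perp\neq 1$, hence $d^\perp\ge 2$ with no assumption on $d$ at all (note $C^\perp\neq\{0\}$ since $r=h+r_0<nh$). With that substitution your proof is uniform in $n$ and reaches the same conclusion as the paper's.
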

\begin{proof}
    Since $C$ is faithful, then $\dim(W_j(C))=r-h=r_0<h$ for every $j=1,\dots,n$. {Since} every geometric quotient of $C$ is obliterating, {the claim follows by} Theorem \ref{Dual faith mds}.
\end{proof}

\begin{cor} Given $\Omega$, an $r_0$-dimensional partial spread of $\F_q^r$, 
     {the code} $C_\Omega^\perp$ is a $[q^h+f(q),n-1-r_0/h,2]_q^h$ QMDS code.
\end{cor}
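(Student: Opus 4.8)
The plan is to realize $C_\Omega^\perp$ as the dual of a \emph{faithful} QMDS code, invoke the theorem immediately preceding this corollary, and finally pin down the minimum distance via the generalized Singleton bound. Throughout I write $n=\#\Omega=q^h+f(q)$ for the maximal $r_0$-dimensional partial spread, and recall that the $k=2$ construction sets $r=h+r_0$.

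First I would observe that the packing associated with $C_\Omega$ is exactly $T(C_\Omega)=\Omega$, whose blocks are the spread elements, each of $\F_q$-dimension $r_0$. Since $r=h+r_0$, we have $r-h=r_0=\dim(W_i(C_\Omega))$ for every $i$, so $C_\Omega$ is faithful by definition. Next I would confirm that $C_\Omega$ is QMDS: because $\Omega$ is a partial spread, its blocks meet pairwise trivially, so every $1$-dimensional subspace of $\F_q^r$ lies in at most one block; hence $\Omega$ is a $1$-$(r,\ge r-h,1)_q$ packing, giving $n-d=1$, i.e. $d=n-1$. As $1\le r_0<h$ forces $k=\lceil r/h\rceil=\lceil (h+r_0)/h\rceil=2$, this reads $d=n-k+1$, which is precisely the QMDS equality.

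With $C_\Omega$ now a faithful QMDS code satisfying $r=h+r_0$ and $1\le r_0<h$, the preceding theorem applies verbatim and yields that $C_\Omega^\perp$ is an additive QMDS code of length $n=q^h+f(q)$ and normalized dimension $n-r/h=n-1-r_0/h$, matching the stated parameters. It then remains only to compute $d^\perp$: the $\F_q$-dimension of $C_\Omega^\perp$ is $r^\perp=nh-r=(n-1)h-r_0$, so $r^\perp/h=(n-1)-r_0/h$ with $0<r_0/h<1$, whence $k^\perp=\lceil r^\perp/h\rceil=n-1$. Substituting into the QMDS equality for $C_\Omega^\perp$ gives $d^\perp=n-k^\perp+1=2$.

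The argument is essentially a bookkeeping exercise once the preceding theorem is in place; the only point requiring a moment of care is the ceiling evaluation $\lceil (n-1)-r_0/h\rceil=n-1$, which relies crucially on the strict bounds $0<r_0/h<1$, that is, on $C_\Omega$ being genuinely fractional. I expect no substantive obstacle beyond verifying this inequality and the parameter arithmetic.
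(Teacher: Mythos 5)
Your proposal is correct and takes essentially the same route the paper intends: the corollary is an immediate application of the preceding theorem to the code $C_\Omega$ arising from the $k=2$ construction. You simply make explicit the details the paper leaves implicit --- that $T(C_\Omega)=\Omega$ forces faithfulness since $\dim W_i = r_0 = r-h$, that the partial-spread property gives $d=n-1$ hence QMDS with $k=2$, and the ceiling computation $k^\perp=\lceil (n-1)-r_0/h\rceil = n-1$ yielding $d^\perp=2$ --- and all of these check out.
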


% \begin{definition}
%     An additive code C is said to be dually QMDS if both $C$ and $C^\perp$ are QMDS.
% \end{definition}

In the following, we want to study the packing $T(C)$ associated to a dually QMDS code. Since we are interested in non-trivial codes, we are interested in codes $C$ such that $d,d^\perp>1$, so by Theorem \ref{thm BLP c faith} we are interested in faithful codes. We propose here another proof of  \cite[Theorem 1]{martinezpenas2024linear} using our notation to help the reader understand better the connections in Section \ref{sectiondda}.
\begin{theorem}\label{iffDQMDS}
    Given  an $[n,r/h,d]$ faithful QMDS code $C$ with $d>1$,  the following conditions are  equivalent:
    \begin{itemize}
        \item[a)] $C$ is dually QMDS;
        \item[b)] $\dim(\bigcap_{j\in J}W_j)=r-|J|h$ for any $J\in P_{\leq k-1}([n])$ where $ P_{\leq k-1}([n])$ indicates the set of subsets of $[n]$ of cardinality at most $k-1$.
    \end{itemize}
\end{theorem}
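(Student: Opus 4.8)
The plan is to read condition (a) as ``$C^\perp$ is QMDS'' (since $C$ is QMDS by hypothesis) and to route the whole argument through the geometric-quotient machinery already set up. First I observe that, because $C$ is faithful with $d>1$, Theorem \ref{thm BLP c faith}(2) guarantees that $C^\perp$ is itself faithful (with $d^\perp\ge 2$). Hence ``$C^\perp$ is QMDS'' coincides with ``$C^\perp$ is a faithful QMDS code'', which is exactly the hypothesis that Theorem \ref{Dual faith mds} characterizes. Since $C$ is QMDS, that theorem yields: $C^\perp$ is faithful QMDS if and only if every non-obliterating geometric quotient of $C$ is faithful. Thus (a) is equivalent to the statement $(\star)$: for every $J\subseteq[n]$ with $C/J$ non-obliterating and every $j'\notin J$, the quotient $C/J$ is faithful at $j'$.

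Next I translate $(\star)$ into pure dimension bookkeeping. Writing $d_J:=\dim(\bigcap_{j\in J}W_j)$, the inductive form of Proposition \ref{propTcGIsom} shows that $C/J$ has $\F_q$-dimension $d_J$, so $|C/J|=q^{d_J}$ and $C/J$ is non-obliterating precisely when $d_J\ge h$; moreover Remark \ref{Remark dual} says $C/J$ is faithful precisely when $d_{J\cup\{j'\}}=d_J-h$ for all $j'\notin J$. Because each $W_j$ has codimension $h$ (faithfulness), one always has $d_\emptyset=r$, $d_{\{j\}}=r-h$, and the inequality $d_{J\cup\{j'\}}\ge d_J-h$. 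Hence $(\star)$ reads: whenever $d_J\ge h$, adjoining any further index drops the intersection dimension by exactly $h$. Writing $r=(k-1)h+r_0$ with $1\le r_0\le h$, the task is now to prove $(\star)\Leftrightarrow$(b) as a statement about the integers $d_J$.

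For $(\star)\Rightarrow$(b) I induct on $t=|J|\le k-1$: the cases $t=0,1$ are the base facts above, and for the step I set $J=J_0\cup\{j'\}$ with $|J_0|=t-1\le k-2$, so the inductive hypothesis gives $d_{J_0}=r-(t-1)h\ge h+r_0>h$; thus $C/J_0$ is non-obliterating and $(\star)$ yields $d_J=d_{J_0}-h=r-th$. For (b)$\Rightarrow(\star)$ I first use that the QMDS property of $C$, via Corollary \ref{cor Prop c_i=0 then u in V_i^perp}, forces every nonzero vector into at most $k-1$ of the $W_j$; hence $\bigcap_{j\in J}W_j=\{0\}$ whenever $|J|\ge k$, so $d_J\ge h$ can occur only for $|J|\le k-1$. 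Taking such a $J$ and $j'\notin J$: if $|J|\le k-2$ both $J$ and $J\cup\{j'\}$ lie in the range of (b) and $d_{J\cup\{j'\}}=r-(|J|+1)h=d_J-h$ at once; the remaining case $|J|=k-1$ can occur only when $r_0=h$ (otherwise $d_J=r_0<h$), and there $d_J=h$ while $|J\cup\{j'\}|=k$ forces $d_{J\cup\{j'\}}=0=d_J-h$ by the incidence bound just noted. This establishes $(\star)\Leftrightarrow$(b), and combined with (a)$\Leftrightarrow(\star)$ proves the theorem.

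I expect the main obstacle to be exactly this last, integral boundary case. For fractional codes the non-obliterating range never reaches $|J|=k-1$, so (b) over $|J|\le k-1$ matches $(\star)$ verbatim; but when $r/h\in\mathbb{N}$ (so $r_0=h$), condition (b) is silent about the $k$-fold intersections, and one must import the QMDS hypothesis on $C$ itself to show these intersections are trivial and thereby close the gap. Some care is also needed to confirm that $|C/J|=q^{d_J}$, so that ``non-obliterating'' genuinely is the threshold $d_J\ge h$, and to justify invoking Theorem \ref{Dual faith mds} at all, which is precisely why the faithfulness of $C^\perp$ (supplied by $d>1$ through Theorem \ref{thm BLP c faith}) is secured at the very start.
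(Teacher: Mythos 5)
Your proof is correct and follows essentially the same route as the paper's: both reduce condition (a) to the faithfulness of all non-obliterating geometric quotients of $C$ (using Theorem \ref{thm BLP c faith} to see that $C^\perp$ is automatically faithful, then Theorem \ref{Dual faith mds}), and both translate quotient data into intersection dimensions via Proposition \ref{propTcGIsom} and Remark \ref{Remark dual}, with the same induction on $|J|$ for (a)$\Rightarrow$(b). The genuine difference is in (b)$\Rightarrow$(a), where your argument is more complete than the paper's. The paper takes an unfaithful non-obliterating quotient $C/J$ with $|J|=\ell-1$, produces $j_\ell$ with $\dim\bigl(\bigcap_{j\in J\cup\{j_\ell\}}W_j\bigr)>r-\ell h$, and declares this ``a contradiction''; but it contradicts (b) only when $\ell\le k-1$, i.e.\ when $|J|\le k-2$. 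In the integral case $r_0=h$, a non-obliterating quotient can have $|J|\ge k-1$ (e.g.\ $\dim(C/J)=r-(k-1)h=h$), and (b) is silent about $k$-fold intersections, so the paper's contradiction does not apply verbatim there. You close exactly this gap by importing the QMDS property of $C$ itself through Corollary \ref{cor Prop c_i=0 then u in V_i^perp}: since $d=n-k+1$, a nonzero vector lies in at most $k-1$ of the $W_j$, so all $k$-fold intersections are trivial; this both restricts unfaithful non-obliterating quotients to $|J|\le k-2$ and forces the dimension drop $d_{J\cup\{j'\}}=0=d_J-h$ in the boundary case $|J|=k-1$. Since the theorem as stated covers integral codes as well as fractional ones, this extra step is actually needed, and your version is, if anything, the more rigorous of the two.
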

\begin{proof}

    \item $(a)\Rightarrow(b)$. The proof will be by induction on the cardinality of $J$.

    If $|J|=1$ then it is trivial since $C$ is faithful. Let $|J|=\ell>1$, and w.l.o.g. consider $J=[\ell]$. By contradiction let $\dim(\bigcap_{j=1,\dots,\ell}W_j)>r-\ell h$. We will prove that $C/[\ell-1]$ is unfaithful, and so by Theorem \ref{Dual faith mds} we will obtain a contradiction. We know that $\dim(C/[\ell-1])=\dim(\bigcap_{j=1,\dots,\ell-1}W_j)=r-(\ell-1)h$, so to be faithful we need that $\dim(W)=r-(\ell-1)h-h$ for any $W\in T(C/[\ell-1])$. By Remark \ref{Remark dual} we know that there exist $\overline{W}\in T(C/[\ell-1])$ such that $\overline{W}\simeq(\bigcap_{j=1,\dots,\ell-1}W_j)\cap W_\ell$, and so $\dim(\overline{W})>r-(\ell-1)h-h$ giving us the unfaithfulness of $C/{[\ell-1]}$, a contradiction.
    
    $(b)\Rightarrow(a)$. Suppose that $C^{\bot}$ is not QMDS. By Theorem \ref{Dual faith mds} there is a non-obliterating geometric quotient $C/J$, with $J=\{j_1,\dots,j_{\ell-1}\}$ that is unfaithful. Then, by  Remark \ref{Remark dual}, there exists $j_\ell\in[n]$ such that $\dim(\bigcap_{j=j_1,\dots,j_\ell}W_j)>r-\ell h$, a contradiction.
\end{proof}

\begin{cor}\label{geomQuotQMDS}
    The geometric quotient $C/J$ of a dually QMDS code $C$ is dually QMDS.
\end{cor}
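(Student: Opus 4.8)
The plan is to deduce the statement directly from the characterization of dually QMDS codes via geometric quotients (Theorem \ref{Dual faith mds}), exploiting the fact that geometric quotients compose. Throughout I work under the standing hypotheses of this section, namely that $C$ is faithful with $d,d^\perp>1$; then by Theorem \ref{thm BLP c faith} the dual $C^\perp$ is faithful QMDS as well. If $|J|\ge k$ then every $k$-subset $B\subseteq J$ already gives $\bigcap_{i\in B}W_i(C)=\{0\}$, so $\dim(C/J)=\dim\bigcap_{i\in J}W_i(C)=0$ and $C/J$ is the zero code; I therefore restrict to $|J|\le k-1$.

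First I would record the two structural facts I need. Applying Theorem \ref{Dual faith mds} to $C$ itself, whose dual is faithful QMDS, yields that every non-obliterating geometric quotient of $C$ is faithful; call this property $(\ast)$. Second, geometric quotients compose: for disjoint index sets one has $(C/J)/J''=C/(J\cup J'')$ after the obvious reindexing, and the non-obliterating condition $|{\cdot}|\ge q^h$ is literally the same quantity on both sides. Together these say that every non-obliterating geometric quotient of $C/J$ is, under this identification, a non-obliterating geometric quotient of $C$, hence faithful by $(\ast)$.

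Next I would show that $C/J$ is itself QMDS, which is the hypothesis needed to apply Theorem \ref{Dual faith mds} to $C/J$. Iterating Proposition \ref{propTcGIsom} gives, for any $A'\subseteq[n]\setminus J$, the isomorphism $\bigcap_{j\in A'}W_j(C/J)\simeq\bigcap_{i\in J\cup A'}W_i(C)$, and in particular $\dim(C/J)=\dim\bigcap_{i\in J}W_i(C)\ge r-|J|h$ by the codimension bound, since each $W_i(C)$ has codimension $h$. Hence $k(C/J)=\lceil\dim(C/J)/h\rceil\ge k-|J|$. By Corollary \ref{cor Prop c_i=0 then u in V_i^perp}, the QMDS property of $C$ is equivalent to $\bigcap_{i\in B}W_i(C)=\{0\}$ for every $B$ with $|B|=k$; taking any $A'$ with $|A'|=k(C/J)$ one gets $|J\cup A'|=|J|+k(C/J)\ge k$, so the isomorphism above and the QMDS property of $C$ force $\bigcap_{j\in A'}W_j(C/J)=\{0\}$. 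Thus no nonzero vector lies in $k(C/J)$ of the blocks of $T(C/J)$, and Corollary \ref{cor Prop c_i=0 then u in V_i^perp} gives $d(C/J)\ge (n-|J|)-k(C/J)+1$; together with the generalized Singleton bound this is an equality, so $C/J$ is QMDS.

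Finally I would apply Theorem \ref{Dual faith mds} to the QMDS code $C/J$: its non-obliterating geometric quotients are all faithful by the second paragraph, so $(C/J)^\perp$ is faithful QMDS. As $C/J$ is QMDS as well, it is dually QMDS, as claimed. The main obstacle I anticipate is bookkeeping rather than conceptual: carefully verifying the composition law $(C/J)/J''=C/(J\cup J'')$ together with its effect on the non-obliterating condition and on the identification of the blocks $W_j$, and confirming the codimension bound $\dim\bigcap_{i\in J}W_i(C)\ge r-|J|h$ used to pin down $k(C/J)\ge k-|J|$.
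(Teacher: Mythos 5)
Your proof is correct, but it takes a genuinely different route from the paper's. The paper proves the corollary by reducing to $|J|=1$ (quotients compose inductively), transferring the blocks via Proposition \ref{propTcGIsom}, and then invoking Theorem \ref{iffDQMDS} in both directions: direction $(a)\Rightarrow(b)$ on $C$ gives the exact intersection dimensions $\dim\bigl(\bigcap_{j\in J_0\cup\{1\}}W_j(C)\bigr)=r-(|J_0|+1)h$, which are inherited by $T(C/J)$, and direction $(b)\Rightarrow(a)$ on $C/J$ then yields the conclusion. You instead bypass Theorem \ref{iffDQMDS} entirely and argue one level deeper, from Theorem \ref{Dual faith mds} (whose role is hidden inside the paper's proof of Theorem \ref{iffDQMDS}): you use it once on $C$ to get faithfulness of all non-obliterating quotients, push that through the composition law $(C/J)/J''=C/(J\cup J'')$, verify directly via Corollary \ref{cor Prop c_i=0 then u in V_i^perp} and the codimension bound that $C/J$ is QMDS, and then apply Theorem \ref{Dual faith mds} once more to $C/J$. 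What the paper's route buys is brevity and geometric transparency — the rigid intersection pattern of condition $(b)$ visibly descends to the quotient. What your route buys is that it fills in steps the paper leaves implicit: the $(b)\Rightarrow(a)$ direction of Theorem \ref{iffDQMDS} formally presupposes that $C/J$ is already a faithful QMDS code with $d>1$, which the paper never verifies, whereas your third paragraph establishes QMDS-ness of $C/J$ explicitly; you also treat arbitrary $|J|$ in one pass (no induction) and dispose of the degenerate case $|J|\geq k$ up front. Both arguments share the same minor bookkeeping debts (the iterated form of Proposition \ref{propTcGIsom}, respectively the composition law), so neither is strictly more rigorous on that score, but your QMDS verification is a genuine addition.
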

\begin{proof}
    We can prove it for $|J|=1$ since we can define the geometric quotient inductively. Without loss of generality, let $J=\{1\}$.

    From Proposition \ref{propTcGIsom} we know that $W_j(C/J)\simeq W_1(C)\cap(W_{j+1}(C))$. So $$\dim\Big(\bigcap_{j\in J_0}W_j(C/J)\Big)=\dim\Big(\bigcap_{j\in J_0}W_j(C)\cap W_1(C)\Big)=\dim\Big(\bigcap_{j\in J_0\cup\{1\}}W_j(C)\Big),$$ for any $J_0\subseteq\{2,\dots,n\}$. From  Theorem \ref{iffDQMDS} we have $$\dim\Big(\bigcap_{j\in J_0\cup\{1\}}W_j(C)\Big)=r-(|J_0|+1)h=(r-h)-|J_0|h=\dim(C/J)-|J_0|h,$$ for any $J_0\subseteq\{2,\dots,n\}$. By Theorem \ref{iffDQMDS},  we obtain that $C/J$ is dually QMDS.
\end{proof}

\section{Connection between additive codes and DDA}\label{sectiondda}
As a byproduct of Theorem \ref{iffDQMDS} we have:
\begin{cor}\label{DDAtoCode}
    An $h$-dimensional dual arc $L$ in $\F_q^{2h+1}$ of cardinality $n$ is equivalent to an $[n,2+1/h,d]_q^h$ dually QMDS code. 
\end{cor}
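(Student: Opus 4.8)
The plan is to run the dictionary between additive codes and their associated packings $T(C)=\{W_i\}$ together with the characterization of dually QMDS codes in Theorem \ref{iffDQMDS}. First I would fix the parameters: the type $[n,2+1/h,d]_q^h$ forces $r=2h+1$ and $k=\lceil r/h\rceil=3$, so that the range $|J|\le k-1=2$ appearing in Theorem \ref{iffDQMDS}(b) is exactly the range in which the DDA axioms live. Using the code--packing correspondence and the Proposition identifying code equivalence with equivalence of packings under an $\F_q$-linear bijection $\Phi$, the claim reduces to a statement purely about the multiset $\{W_i\}$ of subspaces of $\F_q^{2h+1}$: namely, that $\{W_i\}$ is an $h$-dimensional DDA precisely when the associated code is faithful, QMDS, and dually QMDS. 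I would then prove the two implications separately, always keeping in mind that the DDA blocks are the $W_i=U_i^\perp$ (of dimension $h+1$), not the $U_i$.

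For the direction code $\to$ DDA I assume $C$ is dually QMDS of the stated type with $n\ge 4$, so that $d=n-2>1$ and Theorem \ref{iffDQMDS} applies. I would first argue $C$ is faithful: since $C^\perp$ is QMDS its distance is $d^\perp=n-k^\perp+1=3\neq 1$, whence $C$ is faithful by Theorem \ref{thm BLP c faith}(1), giving $\dim W_i=r-h=h+1$ (the blocks have the correct dimension). Theorem \ref{iffDQMDS}(b) with $|J|=2$ then yields $\dim(W_i\cap W_j)=r-2h=1$, which is DDA axiom 1. For axiom 2 I would invoke Corollary \ref{cor Prop c_i=0 then u in V_i^perp}: the QMDS equality $n-d=k-1=2$ says the maximum number of blocks through a common nonzero vector equals $2$, so $W_i\cap W_j\cap W_\ell=\{0\}$ for distinct indices. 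Axiom 3 (the blocks span $V$) is then automatic, since $\dim(W_i+W_j)=2(h+1)-1=2h+1=\dim V$ already forces $W_i+W_j=V$.

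For the converse, DDA $\to$ code, I would reverse these steps. From an $h$-dimensional DDA $\{W_i\}$ in $\F_q^{2h+1}$, axioms 1 and 2 show each point lies in at most two blocks and some point in exactly two, so $\{W_i\}$ is a $1\text{-}(2h+1,\ge h+1,2)_q$ packing realizing $n-d=2$. Axiom 2 with $n\ge 3$ gives $\bigcap_i W_i=\{0\}$, hence $\sum_i U_i=(\bigcap_i W_i)^\perp=V$, so the generator matrix has full rank and the code has $\F_q$-dimension exactly $r=2h+1$; thus $C$ is QMDS of type $[n,2+1/h,n-2]_q^h$ and faithful. Verifying Theorem \ref{iffDQMDS}(b) for $|J|\in\{1,2\}$ (faithfulness $\dim W_i=h+1$, and $\dim(W_i\cap W_j)=1$) then certifies that $C$ is dually QMDS, and the equivalence statement follows from the Proposition on $\Phi$-equivalence of packings.

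The only genuinely delicate points I expect are bookkeeping rather than conceptual. The first is translating the Hamming/QMDS equality into the \emph{triple}-intersection axiom via Corollary \ref{cor Prop c_i=0 then u in V_i^perp}, since Theorem \ref{iffDQMDS} only controls intersections of at most $k-1=2$ blocks and therefore cannot by itself deliver axiom 2. The second is verifying the code has the exact dimension $r=2h+1$ (rather than less), which is where $\bigcap_i W_i=\{0\}$ is used. I would also flag the small-$n$ degeneracies: $n=3$ gives $d=1$ (so Theorem \ref{iffDQMDS} does not apply directly) and $n=2$ collapses the dimension, so the clean equivalence is stated for $n\ge 4$.
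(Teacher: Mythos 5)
Your proposal is correct and takes essentially the same route as the paper, whose entire proof is the one-line observation that $L$ is a $2$-packing satisfying condition $(b)$ of Theorem \ref{iffDQMDS}; your argument is simply that observation unpacked in both directions, using the code--packing dictionary, faithfulness via Theorem \ref{thm BLP c faith}, and Corollary \ref{cor Prop c_i=0 then u in V_i^perp} for the triple-intersection axiom. Your additional care --- noting that condition $(b)$ alone (controlling only $|J|\le k-1=2$) cannot yield DDA axiom 2 without the packing/QMDS equality $n-d=2$, that full code dimension requires $\bigcap_i W_i=\{0\}$, and that the small cases $n\le 3$ degenerate since Theorem \ref{iffDQMDS} needs $d>1$ --- fills in details the paper's terse proof leaves implicit.
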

\begin{proof}
    It is sufficient to observe that $L$ is a $2$-packing that satisfies condition  $(b)$ in Theorem \ref{iffDQMDS}.
\end{proof}

We focus now on $[n,2+1/h,d]_q^h$ dually QMDS codes. In this case, by   \cite[Theorem 14]{BLP} we obtain $n\leq q^h+\frac{q^h-1}{q-1}+1=\theta_q(h)+1$. So the existence of a $[{\theta_q(h)+1},2+1/h,d]_q^h$ dually QMDS code is equivalent to the existence of a DHO. 

A trivial corollary of Theorem \ref{noDHO} is:
\begin{cor}\label{noDQMDS3}
    There is no $[\theta_q(h)+1,2+1/h,d]_q^h$ dually QMDS code when $q$ is odd.
\end{cor}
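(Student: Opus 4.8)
The plan is to chain the equivalence established immediately above with the structural obstruction on dual hyperovals, so that the corollary becomes the contrapositive of Theorem~\ref{noDHO}. First I would invoke Corollary~\ref{DDAtoCode}, which identifies a dually QMDS code of type $[n,2+1/h,d]_q^h$ with an $h$-dimensional dual arc in $\F_q^{2h+1}$ of cardinality $n$. It is worth recording explicitly why the ambient dimension is $2h+1$: the normalized dimension $2+1/h$ forces $r=2h+1$, so the faithful code has subspaces $W_j$ of dimension $r-h=h+1$, i.e. the associated $2$-packing is a set of $(h+1)$-dimensional subspaces of $\F_q^{2h+1}$, which is precisely the datum of an $h$-dimensional dual arc ($d=h$, ambient $\F_q^{2d+1}$). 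This confirms that the hypotheses of Theorem~\ref{noDHO} align with the present situation without any dimension mismatch.

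Next I would specialize to the extremal length $n=\theta_q(h)+1$. A dual arc attaining this cardinality is, by the definition given in Section~\ref{Section:Pre}, exactly a dual hyperoval (DHO). Hence the existence of a $[\theta_q(h)+1,2+1/h,d]_q^h$ dually QMDS code is equivalent to the existence of an $h$-dimensional DHO in $\F_q^{2h+1}$. Applying Theorem~\ref{noDHO} with $d=h$ tells us that such a DHO can exist only when $q$ is even; taking the contrapositive, when $q$ is odd there is no $h$-dimensional DHO in $\F_q^{2h+1}$, and therefore no code of the stated type.

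There is, honestly, no substantive obstacle in the argument itself: the entire mathematical weight sits in the cited Theorem~\ref{noDHO} (from \cite{delfra}) and in the dictionary of Corollary~\ref{DDAtoCode}. The only care required is bookkeeping, namely verifying that the maximal-cardinality dual arc is genuinely a DHO and that the ambient space has odd dimension $2h+1=2d+1$ with $d=h$, so that Theorem~\ref{noDHO} applies verbatim. Once these identifications are in place the conclusion is immediate.
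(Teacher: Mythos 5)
Your proof is correct and follows essentially the same route as the paper: the text preceding the corollary chains Corollary~\ref{DDAtoCode} with the definition of a DHO at the extremal cardinality $\theta_q(h)+1$, and then invokes Theorem~\ref{noDHO} exactly as you do. Your explicit dimension bookkeeping ($r=2h+1$, subspaces of dimension $h+1$, so $d=h$ in Theorem~\ref{noDHO}) is a welcome but inessential elaboration of what the paper leaves implicit.
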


Now let us consider the more general case $r=(k-1)h+1$ with $k\geq3$.  By   \cite[Theorem 14]{BLP}, for an $[n,(k-1)+1/h,d]_q^h$ dually QMDS code, $n\leq q^h+\frac{q^h-1}{q-1}+k-2=\theta_q(h)+k-2$.

\begin{cor}
    There is no $[\theta_q(h)+k-2,(k-1)+1/h,d]_q^h$ dually QMDS code when $q$ is odd.
\end{cor}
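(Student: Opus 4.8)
The plan is to reduce the general case to the $k=3$ case already handled in Corollary \ref{noDQMDS3}, by applying the geometric quotient operation. The key observation is that Corollary \ref{geomQuotQMDS} tells us the geometric quotient of a dually QMDS code is again dually QMDS, and each quotient reduces the normalized dimension parameter $k$ by exactly one while reducing the length by one.

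First I would argue by contradiction: suppose a $[\theta_q(h)+k-2,(k-1)+1/h,d]_q^h$ dually QMDS code $C$ exists with $q$ odd and $k\geq 3$. Since $d>1$ (the code is long, so $n$ is large relative to $k$, forcing $d = n-k+1 \geq 2$) and $C$ is dually QMDS, it is faithful by Theorem \ref{thm BLP c faith}. I would then take a geometric quotient $C/J$ with $|J|=k-3$. Using Proposition \ref{propTcGIsom} repeatedly (or its inductive extension), the resulting code has normalized dimension $r/h - (k-3) = ((k-1)h+1)/h - (k-3) = 2 + 1/h$, so it is an $[n',2+1/h,d']_q^h$ code with $n' = (\theta_q(h)+k-2)-(k-3) = \theta_q(h)+1$. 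By Corollary \ref{geomQuotQMDS}, this quotient is still dually QMDS, yielding a $[\theta_q(h)+1,2+1/h,d']_q^h$ dually QMDS code — which Corollary \ref{noDQMDS3} forbids for $q$ odd. This contradiction completes the argument.

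The main technical point to verify carefully is that the geometric quotient can actually be taken with $|J|=k-3$ while keeping the length exactly $\theta_q(h)+1$, i.e. that no two of the deleted positions collapse the length by more than one each and that each quotient along the way is non-obliterating and faithful so that Corollary \ref{geomQuotQMDS} applies iteratively. Concretely, I would use Theorem \ref{iffDQMDS}: condition (b) guarantees $\dim(\bigcap_{j\in J}W_j)=r-|J|h$ for any $J$ with $|J|\leq k-1$, so for $|J|=k-3\leq k-1$ the intersection has dimension $r-(k-3)h = 2h+1$, confirming that the quotient $C/J$ has the claimed dimension $2+1/h$ and that the subspace packing $T(C/J)$ lives in $\F_q^{2h+1}$ as required for Corollary \ref{noDQMDS3}.

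The hard part will be the bookkeeping: ensuring that the length really drops by exactly $k-3$ (one per quotient step) rather than more, which requires that each singleton quotient $C/\{j\}$ is non-obliterating. This should follow from dual QMDS-ness together with Theorem \ref{iffDQMDS}, since faithfulness of all the relevant intersections prevents the length from collapsing, but I would state it as a short lemma or inline remark rather than leaving it implicit. Alternatively, and perhaps more cleanly, I would avoid the iteration entirely and invoke Corollary \ref{geomQuotQMDS} once for the set $J$ of size $k-3$ directly, reading off both the length $\theta_q(h)+1$ and the dimension $2+1/h$ from the isomorphisms $W_j(C/J)\simeq \bigcap_{i\in J}W_i(C)\cap W_{j}(C)$ in Proposition \ref{propTcGIsom}, and then appealing to Corollary \ref{noDQMDS3}.
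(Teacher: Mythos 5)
Your proposal is correct and is essentially the paper's own proof: the paper argues by induction on $k$, taking one geometric quotient per step and invoking Corollary \ref{geomQuotQMDS} to preserve the dually QMDS property until reaching the base case Corollary \ref{noDQMDS3}, and your single quotient by a set $J$ of size $k-3$ is just that induction unrolled. The bookkeeping you worry about is automatic -- the length of $C/J$ is $n-|J|$ by definition of the geometric quotient, and Theorem \ref{iffDQMDS}(b) pins down its dimension as $r-(k-3)h=2h+1$ -- so no additional lemma is needed.
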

\begin{proof}
    We will use induction on $k$. Corollary \ref{noDQMDS3} is the base case. 

    By contradiction, let $C$ be a $[\theta_q(h)+k-2,(k-1)+1/h,d]_q^h$ dually QMDS code.  By Corollary \ref{geomQuotQMDS} we have that $C/\{1\}$ is a $[\theta_q(h)+k-3,(k-2)+1/h,d]_q^h$ dually QMDS code, a contradiction to the induction hypothesis.
\end{proof}

\subsection{Constructions with $h$-dimensional DHO} In  \cite[Example 2.5]{CoopersteinThas} an $h$-dimensional DHO in $\F_2^{2h+1}$ is constructed. From Corollary \ref{DDAtoCode} this corresponds to a $[2^{h+1},2+1/h,d]_2^h$ dually QMDS code. In \cite{BLP} a QMDS code with the same parameter was found. If we consider the dual code in the case $h=2$, we have an $[8,5.5,3]_2^2$ QMDS code, which reach the bound in Corollary \ref{boundDim} and that is also a counterexample for a bound on $k$ found in \cite[Theorem 16]{BLP} (the authors of the paper have published an erratum where they restate the theorem and it holds only in the integral case \cite{BLP2}).

\section*{Open Problems}

Our results demonstrate the existence of additive QMDS codes with length strictly greater than \(q^h + 1\), and establish connections between such codes and geometric objects such as subspace packings. Nonetheless, several natural problems remain open:

\begin{enumerate}
    \item \textbf{Upper bounds on \(k\) for fractional QMDS codes.} 
    For additive QMDS codes of type \([n, r/h, d]_{q^h}\) with fractional rate \(r/h \notin \mathbb{N}\), no general upper bound on \(k = \lceil r/h \rceil\) is known. 

    \item \textbf{Constructions with \(n = q^h + f(q)\) for \(k > 3\).}
    Existing constructions of ``long'' additive QMDS codes with \(n = q^h + f(q)\) are limited to the case \(k \leq 3\). For \(k > 3\), there are no known example. Does them exist? If yes, can one devise general constructions yielding \(n = q^h + f(q)\) with \(f(q) \to \infty\) and \(k \geq 4\),?

   \item \textbf{Structure of packings corresponding to dually QMDS codes.}  
Theorem~\ref{iffDQMDS} provides a precise geometric characterization of dually QMDS codes in terms of the associated packing: a faithful additive QMDS code is dually QMDS if and only if the intersection of any $|J| \leq k-1$ of its defining subspaces $W_j$ has dimension exactly $r - |J|h$. This implies that the subspaces form a highly regular and rigid structure. It would be interesting to study these packings from a geometric and combinatorial perspective: how rare are they? Do they admit a classification or symmetry group? Can one construct new families of dually QMDS codes by enforcing this intersection condition directly?
\end{enumerate}

\section*{Acknowledgements}
The authors thank the Italian National Group for Algebraic and Geometric Structures and their Applications (GNSAGA—INdAM)
which supported the research. 

Yue Zhou is partially supported by the National Natural Science Foundation of China (No.\ 12371337) and the Natural Science Foundation of Hunan Province (No.\ 2023RC1003)

Alessandro Giannoni thanks the KIMSI Institute in Changsha, China, for hosting him in their facility during the writing of this paper.
\section*{Declarations}
{\bf Conflicts of interest.} The authors have no conflicts of interest to declare that are relevant to the content of this
article.

\newpage

\end{document}